\numberwithin{equation}{section}
\theoremstyle{plain}
\newtheorem{thm}{Theorem}[section]
\newtheorem{cor}[thm]{Corollary}
\theoremstyle{definition}
\newtheorem{rem}[thm]{Remark}
\def\X{\mathbb X}
\begin{document}
\title[Bilinear Hardy inequalities on metric measure spaces]
{Bilinear Hardy inequalities on metric measure spaces}

\author[Michael Ruzhansky]{Michael Ruzhansky}
\address{
  Michael Ruzhansky:
    \endgraf
    Department of Mathematics: Analysis, Logic and Discrete Mathematics 
        \endgraf
    Ghent University
      \endgraf
    Krijgslaan 281, Building S8 
      \endgraf
B 9000 Ghent, Belgium
      \endgraf
  and 
  \endgraf
    School of Mathematical Sciences
    \endgraf
  Queen Mary University of London
  \endgraf
  Mile End Road, London E1 4NS
    \endgraf
United Kingdom}
  \endgraf
  \email{michael.ruzhansky@ugent.be, m.ruzhansky@qmul.ac.uk}
  
\author[Anjali Shriwastawa]{Anjali Shriwastawa}
\address{Anjali Shriwastawa:
 \endgraf
  DST-Centre for Interdisciplinary Mathematical Sciences  \endgraf
  Banaras Hindu University, Varanasi-221005, India}
\endgraf

\email{anjalisrivastava7077@gmail.com}

\author[Daulti Verma]{Daulti Verma}
\address{
  Daulti Verma:
  \endgraf
    Miranda House College
  \endgraf
  University of Delhi
  \endgraf
  Delhi-110007, India}
   \endgraf
  \email{daulti.verma@mirandahouse.ac.in}

\thanks{{\em 2020 Mathematics Subject Classification: }26D10, 22E30, 45J05}
 \keywords{Hardy inequalities, bilinear Hardy operator, metric measure spaces, homogeneous Lie groups, hyperbolic spaces, Cartan-Hadamard manifolds}
\allowdisplaybreaks
\begin{abstract}  In this paper, we discuss the Hardy inequality with bilinear operators on general metric measure spaces.
 We give the characterization of weights for the bilinear Hardy inequality to hold on general metric measure spaces having polar decompositions. We also provide several examples of the results, finding conditions on the weights for integral Hardy inequalities on homogeneous Lie groups, as well as on hyperbolic spaces and more generally on Cartan-Hadamard manifolds. 
 \end{abstract}
\maketitle
\tableofcontents
\section{Introduction}\label{sec1}
The original Hardy inequality  which was given by G. H. Hardy has been intensely studied since 1920, says that: for any $p>1$, if $f$ is a positive measurable function, then the inequality 
\begin{align}\label{h1}
    \int_0^\infty\left(\frac{1}{x}\int_0^x f(y)\,dy\right)^pdx\le \left(\frac{p}{p-1}\right)^p\int_0^\infty f^p(x)\, dx,
\end{align} holds (see \cite{H1}). Again, G. H. Hardy obtained the weighted version of inequality \eqref{h1} (see \cite{H2}): if $f$ is a positive measurable function and if $p>1$ and $\epsilon<p-1,$ then the inequality \begin{align}\label{h2}
    \int_0^\infty \left(\int_0^xf(y)\,dy\right)^p\,x^{\epsilon-p}\,dx\le \left(\frac{p}{p-1-\epsilon}\right)^p\int_0^\infty f^p(x)\,x^{\epsilon}\,dx,
\end{align}
holds. There is a lot of literature available for different forms of Hardy inequalities and it is not an easy task to mention all, but a few of the references are \cite{CG,D,DK,EE,H1,H2,KRS, KKR, KRZ,Kuf,Muck,Ngu,RST1,RST,RS1,RS,RSY,RY}. In \cite{S}, Sinnamon  obtained Hardy inequalities for the case $0<q<1<p<\infty.$   Again, in \cite{SS}, Sinnamon and Stepanov  established Hardy inequalities for the case $0<q<1$ and $p=1.$ For the case $p\le q$, many authors  Bradley \cite{Brad}, Talenti \cite{Talenti} and Muckenhoupt \cite{Muck} obtained Hardy inequalities in different perspectives. In \cite{RV,RV1}, the first author with third author obtained  the Hardy inequalities on metric measure spaces for the case $1< p \leq q< \infty,$  and $0<q<p$, $1<p<\infty$.  In \cite{KRS}, the first author with Kassymov and Suragan obtained a reverse version of the integral Hardy inequality on metric measure spaces with two negative
exponents for the case $q\le p<0.$ Again, In \cite{RST1}, the first and second author with Tiwari  established Hardy inequalities on metric measure space
possessing polar decompositions for the case $p= 1$ and $1\le q<\infty.$ For a discussion concerning polar decompositions in general metric measure spaces see \cite{AR}.
 
Now, we recall a concise summary of the bilinear Hardy inequality in various contexts. The bilinear Hardy inequality has important applications in the study of partial differential equations and harmonic analysis. It allows for the analysis of bilinear operators and their properties, and it provides crucial estimates for the behaviour of solutions to certain PDEs involving bilinear terms. It is worth noting that there are various generalisations and refinements of the bilinear Hardy inequality for different function spaces, domains and dimensions. The specific form and conditions of the inequality may vary depending on the particular setting in which it is applied.

Ca\~{n}estro et al. \cite{CSR} discussed the weighted bilinear Hardy operator for nonnegative measurable functions on $(0,\infty)$ which says that: if $\widetilde{U},\widetilde{V_1}$ and $\widetilde{V_2}$ are weight functions, then the inequality
\begin{align}
\nonumber&\left(\int_0^\infty \left[H_2(F_1,F_2)(y) \right]^q \widetilde{U}(y) dy \right)^{1/q} \leq C \left( \int_0^\infty F_1^{p_1}(y)\widetilde{V_1}(y) dy \right)^{1/p_1}\\& \quad\quad\quad\quad\quad\quad\quad\quad\quad\quad\quad\quad\times\left( \int_0^\infty F_2^{p_2}(y)\widetilde{V_2}(y) dy \right)^{1/p_2},
\end{align} 
holds for $F_1,F_2\geq0$ with different conditions on the indices $p_1,p_2,q.$ Here
\begin{align*}
  H_2(F_1,F_2)(y):=\bigg(\int_0^y F_1(x)dx \bigg)\bigg(\int_0^y F_2(x)dx \bigg),
   \end{align*} is the  bilinear operator.
   This problem has been studied by many authors. 
  Krepela \cite{Krep} proved the boundedness of the bilinear Hardy operator using the iterative technique. The theory of bilinear Hardy inequalities with weights has been developed in many directions. For a general
perspective of the topic, one can refer to papers \cite{garcia,jain,Talenti}.

In this paper, we obtain the bilinear Hardy inequality on general metric measure spaces with polar decompositions, with some examples and applications in the setting of homogeneous Lie groups, hyperbolic spaces, and Cartan-Hadamard manifolds. Also, we give some characterizations of weights for bilinear Hardy inequality. Indeed, we are not assuming any doubling condition on the measure. The main result of our paper is:
\begin{thm}\label{THM:Hardy1}
Let $1<p_1,p_2 <\infty$ and let $0<q<\infty$. Let $\mathbb X $ be a metric measure space with a polar decomposition \eqref{EQ:polar} at a. 
Let $u,v_1,v_2> 0$ be measurable functions  positive a.e in $\mathbb X$  such that $u\in L_{loc}^1(\mathbb X\backslash \{a\})$ and $v_i^{1-p_i'}\in L^1_{loc}(\mathbb X),$ where $i=1,2.$ Let
\begin{align}\label{ball hardy operator}
  & H^B_2(f_1,f_2)(x):=H^Bf_1(x)\cdot H^Bf_2(x)=\int_{B(a,|x|)}f_1(y)dy\int_{B(a,|x|)} f_2(y)\,dy.
\end{align} 
Then the inequality
\begin{align}\label{EQ:Hardy1}
\nonumber&\bigg(\int_\mathbb X\bigg(H_2^B(f_1,f_2)(x)\bigg)^q u(x)dx\bigg)^\frac{1}{q}\le C\bigg\{\int_{\mathbb X} f_1(x)^{p_1}v_1(x)dx\bigg\}^{\frac1{p_1}}\\&\quad\quad\quad\quad\quad\quad\quad\quad\quad\quad\times\bigg\{\int_{\mathbb X} f_2(x)^{p_2}v_2(x)dx\bigg\}^{\frac1{p_2}},
\end{align}
holds for all $f_1,f_2\ge 0,$ if and only if the inequality

\begin{align}\label{EQ:Hardy_B}
\nonumber&\left(\int_0^\infty \left(H_2(F_1,F_2)(\tau) \right)^q \widetilde{U}(\tau) d\tau \right)^{1/q} \leq C \left( \int_0^\infty F_1^{p_1}(\tau)\widetilde{V}_1(\tau) d\tau \right)^{1/p_1}\\& \quad\quad\quad\quad\quad\quad\quad\quad\quad\quad\quad\quad\times\left( \int_0^\infty F_2^{p_2}(\tau)\widetilde{V}_2(\tau) d\tau \right)^{1/p_2},
\end{align} holds for all $F_1,F_2\ge 0,$ where $\widetilde U, \widetilde {V}_1, \widetilde {V}_2$ are the weight functions given by

\begin{align}\label{udef}
\widetilde{U}(\tau):= \int_{\sum_\tau} u(\tau,\omega)\,\lambda(\tau,\omega)\,d\omega,
\end{align} 
and 
\begin{align} \label{vi}
\widetilde{V}_{i}(\tau):= \left(\int_{\sum_\tau}v_i^{1-p_i'}(\tau,\omega)\,\lambda(\tau,\omega)\,d\omega\right)^{1-p_i},
\end{align}
where $i=1,2.$
Moreover, the constants $C$ in \eqref{EQ:Hardy1} and \eqref{EQ:Hardy_B} are same. The function $\lambda(\tau,w)$ comes from the polar decomposition and is defined in \eqref{EQ:polar}.
 \end{thm}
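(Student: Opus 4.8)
The plan is to prove both implications by passing to polar coordinates via \eqref{EQ:polar} and then optimizing in the spherical variable through a sharp Hölder inequality. Using the polar decomposition, every integral over $\mathbb{X}$ factors as $\int_{\mathbb{X}} g\,dx = \int_0^\infty \int_{\sum_\tau} g(\tau,\omega)\,\lambda(\tau,\omega)\,d\omega\,d\tau$. The crucial structural observation is that the ball operator $H^B f_i(x) = \int_{B(a,|x|)} f_i$ depends on $x$ only through $|x|$, so if I set $F_i(\tau) := \int_{\sum_\tau} f_i(\tau,\omega)\,\lambda(\tau,\omega)\,d\omega$, then $H^B f_i(x) = \int_0^{|x|} F_i(s)\,ds = HF_i(|x|)$, and hence $H_2^B(f_1,f_2)(x) = H_2(F_1,F_2)(|x|)$. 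With this identity the left-hand side of \eqref{EQ:Hardy1} becomes exactly $\int_0^\infty (H_2(F_1,F_2)(\tau))^q\,\widetilde{U}(\tau)\,d\tau$, the left-hand side of \eqref{EQ:Hardy_B} with $\widetilde{U}$ as in \eqref{udef}; so the entire content of the theorem is the correct transformation of the right-hand weights.

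For the implication \eqref{EQ:Hardy_B} $\Rightarrow$ \eqref{EQ:Hardy1}, I would control the right-hand side by Hölder's inequality on each sphere. Writing $f_i\,\lambda = (f_i v_i^{1/p_i})(v_i^{-1/p_i})\,\lambda$ and applying Hölder with exponents $p_i, p_i'$ in the $\omega$ variable yields $F_i(\tau)^{p_i}\,\widetilde{V}_i(\tau) \le \int_{\sum_\tau} f_i^{p_i}(\tau,\omega)\,v_i(\tau,\omega)\,\lambda(\tau,\omega)\,d\omega$, using the identity $-p_i'/p_i = 1-p_i'$ together with the definition \eqref{vi} of $\widetilde{V}_i$. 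Integrating in $\tau$ gives $\int_0^\infty F_i^{p_i}\widetilde{V}_i\,d\tau \le \int_{\mathbb{X}} f_i^{p_i} v_i\,dx$. Feeding the $F_i$ into \eqref{EQ:Hardy_B} and combining with the identity for the left-hand side then produces \eqref{EQ:Hardy1} with the same constant $C$.

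For the converse \eqref{EQ:Hardy1} $\Rightarrow$ \eqref{EQ:Hardy_B}, given arbitrary $F_1,F_2 \ge 0$ on $(0,\infty)$ I would construct the extremizers that turn the Hölder step above into an equality. Setting $f_i(\tau,\omega) := \frac{F_i(\tau)}{A_i(\tau)}\,v_i(\tau,\omega)^{1-p_i'}$, where $A_i(\tau) := \int_{\sum_\tau} v_i^{1-p_i'}(\tau,\omega)\,\lambda(\tau,\omega)\,d\omega$, a direct computation gives $\int_{\sum_\tau} f_i\,\lambda\,d\omega = F_i(\tau)$ and, using $(1-p_i')p_i + 1 = 1-p_i'$, that $\int_{\sum_\tau} f_i^{p_i} v_i\,\lambda\,d\omega = F_i(\tau)^{p_i}\widetilde{V}_i(\tau)$ \emph{exactly}. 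Hence $\int_{\mathbb{X}} f_i^{p_i} v_i\,dx = \int_0^\infty F_i^{p_i}\widetilde{V}_i\,d\tau$ and $H_2^B(f_1,f_2)(x) = H_2(F_1,F_2)(|x|)$, so substituting these $f_i$ into \eqref{EQ:Hardy1} returns \eqref{EQ:Hardy_B} with the same $C$. Since the forward direction loses a constant only through a Hölder inequality that this choice of $f_i$ saturates, the two optimal constants necessarily coincide.

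The local integrability hypotheses $u \in L^1_{loc}(\mathbb{X}\setminus\{a\})$ and $v_i^{1-p_i'} \in L^1_{loc}(\mathbb{X})$ are precisely what guarantee that $\widetilde{U}$ and $A_i$, and hence $\widetilde{V}_i$, are finite and positive a.e., so every manipulation above is legitimate; I would verify measurability of $F_i$ and of the constructed $f_i$ through Fubini--Tonelli. The main obstacle is the bookkeeping needed to confirm that the extremal $f_i$ genuinely saturate the Hölder inequality so that no constant is lost in the converse — this is exactly what forces the two constants to agree — together with handling the degenerate values $F_i(\tau)=0$ or $A_i(\tau)\in\{0,\infty\}$ so that they do not spoil the argument.
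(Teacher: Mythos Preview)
Your proposal is correct and follows essentially the same route as the paper: both directions rest on the polar-decomposition identity $H_2^B(f_1,f_2)(x)=H_2(F_1,F_2)(|x|)$ together with a spherical H\"older inequality for the forward implication and the extremizing choice of $f_i$ for the converse. Your extremizer $f_i(\tau,\omega)=\dfrac{F_i(\tau)}{A_i(\tau)}\,v_i(\tau,\omega)^{1-p_i'}$ is exactly the paper's choice $f_i(t,s)=F_i(t)\,v_i^{1-p_i'}(t,s)\,\widetilde{V}_i(t)^{1/(p_i-1)}$, since $\widetilde{V}_i^{1/(p_i-1)}=A_i^{-1}$ by the definition \eqref{vi}.
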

     Apart from Section \ref{sec1} , this paper is divided in four sections.  In Section \ref{sec2}, we will
recall the basics of metric measure spaces, which is the main tool of  this paper. Also, we will recall some brief introduction of homogeneous Lie groups, hyperbolic spaces and  Cartan-Hadamard manifolds, which we will use in applications and examples of the main result of this paper. Section \ref{sec3} is devoted to presenting the proof of the main result of this paper. In Section \ref{sec4}, we will introduce the characterization of weights for the bilinear Hardy operators on metric measure spaces. In Section \ref{sec5}, we will give some applications and examples of our main result of this paper.\\ In this paper, we use $A_1\asymp A_2$ to indicate that $\exists\,C_{1},C_{2}>0$ such that $C_{1}A_1\leq A_2\leq C_{2}A_1$.
 \section{Preliminaries}\label{sec2}
In this section, we give a brief overview on the basics of metric measure spaces. For the applications and examples purposes,  we  also recall homogeneous Lie groups, hyperbolic spaces and Cartan-Hadamard manifolds.   For more details on metric measure spaces  as
well of several functional inequalities on metric measure spaces, we refer to papers \cite{KRS,RV,RV1,RST1} and references therein.
 Let us now recall some basics of metric measure spaces.
 
A metric measure space with {\em polar decomposition} is a metric space $(\mathbb X,d)$ with a Borel measure $dx$, having the  {\em polar decomposition} at a fixed point $a\in{\mathbb X}$:
   \begin{equation}\label{EQ:polar}
   \int_{\mathbb X}g(x)dx= \int_0^{\infty}\int_{\Sigma_r} g(r,\omega) \,\lambda(r,\omega)\,d{\omega_r} \,dr,
   \end{equation}
   where $\lambda \in L^1_{loc},$ $g\in L^1(\mathbb X),$  $(r,\omega)\rightarrow a $ as $r\rightarrow0$.
    Here the set $\Sigma_r=\{x\in \mathbb X:d(x,a)=r\}$ is equipped with measure, which we denote by $d\omega_r$. 
The function $\lambda$ may be dependent on the complete variable $x=(r,\omega)$.\\  The motivation behind condition \eqref{EQ:polar} in the context described above is to provide a decomposition formula for the metric measure space $(\mathbb{X},d)$ that does not rely on a differential structure. In cases where a metric measure space has a differential structure, such as a Riemannian manifold, the function $\lambda(r,\omega)$ can be obtained as the Jacobian of the polar change of coordinates or through the polar decomposition formula. However, the situation being dealt with in this particular context is more general, and there is no assumption of a differential structure on the metric measure space $(\mathbb{X},d)$. Therefore, it becomes useful to introduce a decomposition formula \eqref{EQ:polar} that holds in this more general setting. Importantly, no doubling condition is imposed on the metric measure space. 
Thus, we are considering a wider class of metric measure spaces. Moreover, we provide examples of metric measure spaces that satisfy condition \eqref{EQ:polar} with different expressions for $\lambda(r,\omega)$. Some of these examples include hyperbolic spaces, which are known for their non-doubling volume growth. This demonstrates that the condition \eqref{EQ:polar} can be satisfied in diverse settings, including spaces with non-trivial geometric properties. For a detailed analysis and discussion of condition \eqref{EQ:polar} in the context of general metric measure spaces, we refer to \cite{AR}. \\ We give some examples of metric spaces having polar decomposition.
\begin{itemize}
\item[(i)] {\bf Homogeneous Lie groups $\mathbb{G}$:} For this case, we have $\lambda (r,\omega)= {r}^{Q-1},$ where $Q$ is the homogeneous dimension of the homogeneous Lie groups. Simply, one can notice that for the Euclidean spaces $\mathbb{R}^n,$ we have $\lambda (r,\omega)= {r}^{n-1}.$  For more details on such groups, we refer to \cite{FR,FS,RS}.
\item[(ii)]{\bf Hyperbolic spaces $\mathbb{H}^n$:}  The $n$-dimensional hyperbolic space $\mathbb{H}^n$ is the unique simply connected, $n$-dimensional complete Riemannian manifold with a constant negative sectional curvature equal to $-1$. The unicity means that any two Riemannian manifolds which satisfy these properties are isometric to each other. For the case of hyperbolic spaces, we have $\lambda(r,\omega)=(\sinh {r})^{n-1}.$  For more details on hyperbolics spaces, we refer to \cite{BC,GHL,H}.
\item[(iii)] \textbf{Cartan-Hadamard manifolds:} A simply connected complete Riemannian manifold $(M,g)$ of nonpositive curvature is called a Cartan-Hadamard manifold. Let $K_M\le 0$ be the sectional curvature of the Cartan-Hadamard manifold. Let $\exp_p:T_pM\rightarrow M$ be the exponential map at a fixed point $p$, which is also a diffeomorphism.  If $J(r,w)$ is the density function on $M,$ then   we have the following polar decomposition:
\begin{align}\label{condition:11}
    \int_Mf(y)\,dy=\int_0^\infty\int_{S^{n-1}}
    f(\exp_p(r,\omega))J(r,\omega)r^{n-1} \text{d}\omega\,dr,
\end{align}
with $$\lambda(r,\omega)=J(r, \omega)\,r^{n-1}.$$
We refer to \cite{BC,H} for more explanation about  Cartan-Hadamard manifolds.
 \end{itemize}

\section{Proof of the main result}\label{sec3}
Before proving our main result of the paper, let us introduce some notations, which we are using in our paper. We use $B(a,r)$ for the ball in a metric measure space  $\mathbb X,$ where $a$ is the centre  and   $r$ is the radius of the ball. If $d$ is the metric on $\X$ then
$$B(a,r):= \{x\in\mathbb X : d(x,a)<r\}.$$ 
 Also, we write ${\vert x \vert}_a := d(a,x),$ for a fixed point $a\in {\mathbb X},$ to simplify the notation.\\

 
Now, we prove our first main result of this paper:

\begin{proof}
  For the complete proof of the Theorem \ref{THM:Hardy1}, we first prove that if the inequality \eqref{EQ:Hardy_B} holds then the inequality  \eqref{EQ:Hardy1} also holds. \\
    Let us suppose that the inequality \eqref{EQ:Hardy_B} holds. Now, for the fixed $f_1$ and $f_2,$ we define 
  \begin{align}\label{F1}
    F_1(s) :=\int_{\sum_s} f_1(s,t)\,\lambda(s,t)\,dt,
  \end{align}
  and \begin{align}\label{G1}
       F_2(s) :=\int_{{\sum_s}} f_2(s,t)\,\lambda(s,t)\,dt,
  \end{align}
where  $\sum_s=\{x\in \mathbb{X}; d(x,a)=s\}$ denotes the level set of $d$ on  $\mathbb{X}$ and $t\in \sum_s.$ 
Again, using $\frac{1}{p_1}+\frac{1}{p_1'}-1=0$ in \eqref{F1}, we get
\begin{align*}
  &F_1(s) =\int_{\sum_s} f_1(s,t)\,\lambda(s,t)\,dt\\&   =\int_{\sum_s}f_1(s,t)\,\lambda(s,t)\,v_1^{\frac{1}{p_1}+\frac{1}{p_1'}-1}(s,t)dt=\int_{\sum_s}f_1(s,t)\,\lambda(s,t)\,v_1^{\frac{1}{p_1}+\frac{1-p_1'}{p'_1}}(s,t)\,dt.
\end{align*}
Applying H$\ddot{\text{o}}$lder inequality and using  $\frac{1}{p_1'(1-p_1)}=-\frac{1}{p_1}$ in the following, we get
\begin{align}\label{F_1s}
\nonumber&F_1(s)=\int_{\sum_s}f_1(s,t)\,\lambda(s,t)\,v_1^{\frac{1}{p_1}+\frac{1-p_1'}{p_1'}}(s,t)\,dt\\&\nonumber\le \left(\int_{\sum_s}f^{p_1}_1(s,t)\,\lambda(s,t)\,v_1(s,t)\,dt \right)^\frac{1}{p_1}\left(\int_{\sum_s}v_1^{1-p_1'}(s,t)\,\lambda(s,t) \,dt\right)^\frac{1}{p_1'}\\&\nonumber=\left(\int_{\sum_s}f^{p_1}_1(s,t)\,\lambda(s,t)\,v_1(s,t)\,dt\right)^\frac{1}{p_1}\left(\widetilde{V}_1(s)\right)^{\frac{1}{p_1'(1-p_1)}}\\&=\left(\int_{\sum_s}f^{p_1}_1(s,t)\,\lambda(s,t)\,v_1(s,t)\,dx \right)^\frac{1}{p_1}\left(\widetilde{V}_1(s)\right)^{-\frac{1}{p_1}}.
\end{align}
Similarly, from \eqref{G1}, we obtain,
\begin{align}\label{F_2s}
 F_2(s)\le  \left(\int_{\sum_s}f^{p_2}_2(s,t)\,\lambda(s,t)\,v_2(s,t)\,dt \right)^\frac{1}{p_2}\left(\widetilde{V}_2(s)\right)^{-\frac{1}{p_2}}.
\end{align}
Consider the left hand side of inequality \eqref{EQ:Hardy1}, and using the polar decompositions \eqref{EQ:polar}, we have

\begin{align}
  \nonumber&  \left(\int_\mathbb{X}\left[H^B_2(f_1,f_2)(x)\right]^qu(x)\,dx\right)^\frac{1}{q}\\&\nonumber=\left(\int_{\mathbb{X}}\left(\int_{B(a,|x|)}f_1(y)\,dy\right)^q\left(\int_{B(a,|x|)}f_2(z)\,dz\right)^qu(x)\,dx\right)^\frac{1}{q} \\ \nonumber 
  &=\bigg\{\int_0^\infty\int_{\sum_r}\left(\int_0^r \int_{\sum_{r_1}}f_1(r_1,\omega)\lambda(r_1,\omega)\,d\omega\,dr_1\right)^q\\&\nonumber\qquad\qquad\qquad\qquad\qquad\times \left(\int_0^r\int_{\sum_{r_2}}f_2(r_2,\sigma)\lambda(r_2,\sigma)\,d\sigma\,dr_2\right)^qu(r,\xi)\,\lambda(r,\xi)\,d\xi\,dr\bigg\}^\frac{1}{q}.
\end{align}
Again, by using \eqref{F1} and \eqref{G1}, the last expression is
\begin{align}
&= \bigg\{\int_0^\infty\left(\int_0^rF_1(r_1)\,dr_1\right)^q\left(\int_0^rF_2(r_2)\,dr_2\right)^q\widetilde{U}(r)\,dr\bigg\}^\frac{1}{q}. \nonumber
\end{align}
\begin{align}
  &\nonumber=\left(\int_0^\infty\left[H_2(F_1,F_2)(r)\right]^q\widetilde{U}(r)\,dr\right).
\end{align} 
As \eqref{EQ:Hardy_B} holds for any $F_1,F_2\ge0,$ by using \eqref{F_1s} and \eqref{F_2s}, we get
\begin{align}
 \nonumber&  \left(\int_0^\infty\left[H_2(F_1,F_2)(r)\right]^q\widetilde{U}(r)\,dr\right)^\frac{1}{q}\\&\nonumber\quad\quad\quad\quad\le
C\left(\int_0^\infty F_1^{p_1}(r)\,\widetilde{V}_1(r)\,dr\right)^\frac{1}{p_1}\left(\int_0^\infty F_2^{p_2}(r)\,\widetilde{V}_2(r)\,dr\right)^\frac{1}{p_2}
\\&\quad \quad \quad \quad \le\nonumber C\left(\int_0^\infty\int_{\sum_r}f_1^{p_1}(r,\omega)v_1(r,\omega)\lambda(r,\omega)dr\,d\omega\right)^\frac{1}{p_1}\\&\nonumber\quad\quad\quad\quad\quad\quad\quad\quad\quad\quad \times\left(\int_0^\infty\int_{\sum_r}f_2^{p_2}(r,\omega)\,v_2(r,\omega)\lambda(r, \omega)dr\,d\omega\right)^\frac{1}{p_2}\\&\quad\quad\quad= C\left(\int_{\mathbb{X}}f_1^{p_1}(x)\,v_1(x)\,dx\right)^\frac{1}{p_1}\left(\int_{\mathbb{X}}f_2^{p_2}(x)\,v_2(x)\,dx\right)^\frac{1}{p_2}, \nonumber
\end{align}
which implies inequality \eqref{EQ:Hardy1}. Also this implies that $C$ in  \eqref{EQ:Hardy1} is $\le$ $C$ in  \eqref{EQ:Hardy_B}.\\
Conversely, suppose that inequality \eqref{EQ:Hardy1} holds. For any  fixed $F_1$ and $F_2$, we set
\begin{align}\label{eq215}
    f_1(t,s_1)=F_1(t)\,v_1^{1-p_1'}(t,s_1)\left(\widetilde{V}_1(t)\right)^{\frac{1}{p_1-1}}
\end{align}
and 
\begin{align}\label{eq216}
  f_2(t,s_2):=F_2(t)\,v_2^{1-p_2'}(t,s_2)\left(\widetilde{V}_2(t)\right)^{\frac{1}{p_2-1}},  
\end{align}
where $t>0,$ and $s_1,s_2\in \sum_t.$ 
Substituting the value of $\widetilde{V}_1(t)$ and $\widetilde{V}_2(t)$ from \eqref{vi} in \eqref{eq215}and \eqref{eq216}, respectively, we obtain
\begin{align}\label{eq217}
  \int_{\sum_t} f_1(t,s_1) \lambda (t,s_1) \,ds_1 = F_1(t)
\end{align}
and \begin{align}\label{eq218}
   \int_{\sum_t} f_2(t,s_2) \lambda (t,s_2) \,ds_2  = F_2(t).
\end{align}
Let us consider the left hand side of the inequality \eqref{EQ:Hardy_B}, by using \eqref{eq217}, \eqref{eq218} and \eqref{udef}, to get
\begin{align}\label{exp1}
& \nonumber \left(\int_0^\infty \left[H_2(F_1,F_2)(\tau) \right]^q \widetilde{U}(\tau) d\tau \right)^{1/q}\\ \nonumber
 &=\left(\int_0^\infty  \left(\int_0^{\tau}F_1(t)\,dt\right)^q\cdot \left(\int_0^\tau F_2(t)\,dt\right)^q \widetilde{U}(\tau)\,d\tau\right)^{1/q}\\ \nonumber
 &=\bigg\{\int_0^\infty  \left(\int_0^{\tau}\int_{\sum_t} f_1(t,\omega)\,\lambda (t,\omega) \,d\omega \,dt\right)^q\\ \nonumber
 & \quad\quad\quad\quad\times \left(\int_0^\tau \int_{\sum_t} f_2(t,\sigma)\,\lambda (t,\sigma) \,d\sigma \,dt\right)^q \int_{\sum_\tau}u(\tau,\xi)\,\lambda(\tau,\xi)\,d\xi\,d\tau \bigg\}^{1/q}\\ \nonumber
 &=\bigg\{\int_{\mathbb{X}}  \left(\int_{B(a,|x|)} f_1(y)\,dy\right)^q\left(\int_{B(a,|x|)} f_2(y)\,dy\right)^q u(x)dx\bigg\}^{1/q}\\
 &=\bigg\{\int_{\mathbb{X}}  \left[H_2^B(f_1,f_2)(x)\right]^q u(x)dx\bigg\}^{1/q}.
\end{align}

Now, by using the inequality \eqref{EQ:Hardy1} and after that using polar decomposition, from the last expression \eqref{exp1}, we get
\begin{align*}
&\bigg\{\int_{\mathbb{X}}  \left[H_2^B(f_1,f_2)(x)\right]^q u(x)dx\bigg\}^{1/q} \le C\bigg\{\int_{\mathbb X}  f_1^{p_1}(x)\,v_1(x)dx\bigg\}^{\frac1{p_1}}  \bigg\{\int_{\mathbb X} f^{p_2}_2(x)\,v_2(x)dx\bigg\}^{\frac1{p_2}}\\&\nonumber = C\bigg\{\int_0^\infty\int_{\sum_t}  f_1^{p_1}(t,s_1)\,\lambda(t,s_1)\,v_1(t,s_1)\,ds_1\,dt\bigg\}^{\frac1{p_1}} \\&\nonumber \qquad\qquad\qquad\times \bigg\{\int_0^\infty \int_{\sum_t}f^{p_2}_2(t,s_2)\,\lambda(t,s_2)\,v_2(t,s_2)ds_2\,dt\bigg\}^{\frac1{p_2}}\\&\nonumber
= C\bigg\{\int_0^\infty\int_{\sum_t}  F^{p_1}_1(t)\,v_1^{p_1(1-p_1')}(t,s_1)\left(\widetilde{V}_1(t)\right)^{\frac{p_1}{p_1-1}}\,\times \lambda(t,s_1)\,v_1(t,s_1)\,dt\,ds_1\bigg\}^{\frac1{p_1}}\\&\nonumber \times  \bigg\{\int_0^\infty \int_{\sum_t}F^{p_2}_2(t)\,v_2^{p_2(1-p_2')}(t,s_2)\left(\widetilde{V}_2(t)\right)^{\frac{p_2}{p_2-1}}\,\lambda(t,s_2)\,v_2(t,s_2)\,dt\,ds_2\bigg\}^{\frac1{p_2}}\\&\nonumber=C\bigg\{\int_0^\infty \int_{\sum_t} v_1(t,s_1)\,\lambda(t,s_1)\,\bigg[F^{p_1}_1(t)  v_1^{p_1(1-p_1')}(t,s_1)\left(\widetilde{V}_1(t)\right)^{\frac{p_1}{p_1-1} }\bigg]ds_1\,dt\bigg\}^{\frac1{p_1}}\\&\nonumber \times \bigg\{\int_0^\infty \int_{\sum_t} v_2(t,s_2)\,\lambda(t,s_2)\,\bigg[F^{p_2}_2(t)  v_2^{p_2(1-p_2')}(t,s_2)\left(\widetilde{V}_1(t)\right)^{\frac{p_2}{p_2-1}}\bigg]ds_2\,dt\bigg\}^{\frac1{p_2}}\\&\nonumber=C\left(\int_0^\infty F_1^{p_1}(t)\left(\int_{\sum_t}v_1^{1-p_1'}(t,s_1)\,\lambda(t,s_1)ds_1\right)\left(\widetilde{V}_1(t)\right)^{\frac{p_1}{p_1-1}} dt\right)^\frac{1}{p_1}\\&\nonumber\quad\quad\quad\quad\times \left(\int_0^\infty F_2^{p_2}(t)\left(\int_{\sum_t}v_2^{1-p_2'}(t,s_2)\,\lambda(t,s_2)ds_2\right)\left(\widetilde{V}_2(t)\right)^{\frac{p_2}{p_2-1}} dt\right)^\frac{1}{p_2} \\&\nonumber=C\bigg\{\int_0^\infty F^{p_1}_1(t)\left(\widetilde{V}_1(t)\right)^{\frac{p_1}{p_1-1}}\,\left(\widetilde{V}_1(t)\right)^{\frac{1}{1-p_1}}\,dt\bigg\}^{\frac1{p_1}}\\&\nonumber \quad\quad\quad\quad\quad\quad \times \bigg\{\int_0^\infty F^{p_2}_2(t)\left(\widetilde{V}_2(t)\right)^{\frac{p_2}{p_2-1}}\,\left(\widetilde{V}_2(t)\right)^{\frac{1}{1-p_2}}\,dt\bigg\}^{\frac1{p_2}}\\&\quad\quad\quad\quad=C\bigg\{\int_0^\infty F^{p_1}_1(t)\,\Tilde{V}_1(t)\,dt\bigg\}^{\frac1{p_1}} \bigg\{\int_0^\infty F^{p_2}_2(t)\,\widetilde{V}_2(t)\,dt\bigg\}^{\frac1{p_2}},
 \end{align*}
which says that \eqref{EQ:Hardy_B} holds, and this also shows that $C$ in  \eqref{EQ:Hardy_B} is $\le$ $C$ in \eqref{EQ:Hardy1}.
  This completes the proof of Theorem \ref{THM:Hardy1}.
\end{proof}

 \section{Weights characterization for different parameters}\label{sec4}
 In this part, the weight characterizations of the inequality \eqref{EQ:Hardy1} is given for different values of $q,p_1,p_2.$
 Let us denote:
\begin{align}\label{EQ:U}
U(x)= { \int_{\mathbb X\backslash{B(a,|x|_a )}} u(y)\,dy}, 
\end{align} 
\begin{align}\label{EQ:V}
V_i(x)= \int_{B(a,|x|_a  )}v_i^{1-p_i'}(y)\,dy,\,\,\, i=1,2 
\end{align}
\begin{align}
U_1(t)= { \int_t^\infty \widetilde{U}(s) \,ds}, 
\end{align}and  
\begin{align}
V_{1i}(t)= \int_0^t {\widetilde{V}_i}^{1-p_i'}(s) \, ds, \    \,i=1,2 \ ,
\end{align}
 where $u, v_i, \widetilde U, \widetilde {V}_i$ are the weight functions, given in \eqref{udef} and \eqref{vi}. Now, we state the following result proved in \cite{CSR}: 
 \begin{thm}\label{THM:Hardy2}
 Let $1<q, p_1,p_2<\infty.$ The inequality \eqref{EQ:Hardy_B} holds for all $F_1,F_2 \geq 0$ and $\widetilde U, \widetilde {V}_1,\widetilde {V}_2$ as weight functions if and only if 
\begin{itemize}
\item[(i)] For the case $1< \max(p_1,p_2) \leq q<\infty,$
\begin{align*}
&\mathcal B_1:= \sup_{0<t<\infty} {U_1} ^\frac{1}{q}(t)   V_{11}^\frac{1}{{p_1}'}(t)  V_{12}^\frac{1}{p_2'}(t)<\infty.
\end{align*}
Also, the best constant $C$ in  \eqref{EQ:Hardy_B} satisfies
\begin{align}
\mathcal B_1 \leq C \leq 8(1+4^q)^\frac 1 q \mathcal B_1.\end{align}
\item[(ii)] For the case $1<p_1\leq q<p_2<\infty,$  $\frac{1}{r_2}=\frac 1 q-\frac 1 p_2,$ \\

 $ \mathcal B_1<\infty,$
\begin{align*}
&\mathcal B_2:= \sup_{0<t<\infty}V_{11}^\frac{1}{{p_1}'}(t)   \bigg( \int_t^\infty  {U_1}^\frac{r_2}{q}(s)\,  {V_{12}}^\frac{r_2}{q'}(s)\,{\widetilde{V}_2}^{1-p_2'}(s)\,ds \bigg)^\frac{1}{r_2}<\infty.
\end{align*}
Also, the best constant $C$ in  \eqref{EQ:Hardy_B} satisfies\begin{align}
    \max\bigg\{\mathcal B_1,q^\frac1 q\left(\frac{qp_2'}{r_2}\right)^\frac{1}{q'} \mathcal B_2\bigg\} \leq 8\left(\mathcal B_1+q^{\frac{1}{q}}\left(p_2'\right)^\frac{1}{q'}\mathcal B_2 \right).
\end{align}
\item[(iii)] For the case $1<q< \min(p_1,p_2) <\infty,$  $\frac 1 q\leq \frac 1 p_1 +\frac 1 p_2,$    $\frac 1r_i=\frac 1 q-\frac 1 p_i,$ $i=1,2,$  \\

$ \mathcal B_1<\infty,\,
\mathcal B_2<\infty,$ 
\begin{align*}
&\mathcal B_3:= \sup_{0<t<\infty}V_{12}^\frac{1}{p_2'}(t) \bigg( \int_t^\infty U_1^\frac{r_1}{q}(s)  V_{11}^\frac{r_1}{q'}(s){\Tilde{V}_1}^{1-p_1'}(s)ds\bigg)^\frac{1}{r_1}<\infty.
\end{align*}
Also, the best constant $C$ in  \eqref{EQ:Hardy_B} satisfies\begin{align}
   \nonumber &\max\bigg\{\mathcal B_1,q^\frac1 q\left(\frac{qp_2'}{r_2}\right)^\frac{1}{q'} \mathcal B_2,q^\frac1 q\left(\frac{qp_1'}{r_1}\right)^\frac{1}{q'} \mathcal B_3\bigg\} \leq C \\&\quad\quad\quad\quad\quad\quad\quad\quad\quad\quad\leq 8\left(8\mathcal B_3+4\left(\frac{p_1'}{r_1}\right)^\frac{1}{r_1}\mathcal B_1+q^{\frac{1}{q}}\left(p_2'\right)^\frac{1}{q'}\mathcal B_2 \right).
\end{align}
\item[(iv)] For the case $1<q< \min(p_1,p_2) <\infty,$  $\frac 1 q > \frac 1 p_1+\frac 1 p_2,$ $\frac 1 k =\frac 1 q-\frac 1 p_1-\frac 1 p_2,$  \\
$\frac 1 r_i=\frac 1 q-\frac 1 p_i,$ $i=1,2,$
\begin{align*}
&\mathcal B_4:=  \bigg\{ \int_0^\infty    U_1^{\frac{k}{p_1}+\frac{k}{p_2}}(t)V_{11}^\frac{k}{p_1'}(t)V_{12}^\frac{k}{p_2'}(t)\widetilde{U}(t)dt \bigg \}^\frac{1}{k}
 <\infty.
\end{align*}
\begin{align*}
&\mathcal B_5:=  \bigg\{ \int_0^\infty  \bigg( \int_t^\infty   U_1^\frac{r_2}{q}(s)V_{12}^\frac{r_2}{q'}(s)
 {\widetilde{V}_2}^{1-p_2'}(s)ds\bigg)^{k/r_2}
  V_{11}^\frac{k}{r_2'}(t){\widetilde{V}_1}^{1-p_1'}(t) dt \bigg\}^{1/k} <\infty.
\end{align*}
and
\begin{align*}
&\mathcal B_6:=   \bigg\{ \int_0^\infty  \bigg( \int_t^\infty   U_1^\frac{r_1}{q}(s)V_{11}^\frac{r_1}{q'}(s)\,
 {\widetilde{V}_1}^{1-p_1'}(s)ds\bigg)^{k/r_2}
  V_{12}^\frac{k}{r_1'}(t)\,{\widetilde{V}_2}^{1-p_2'}(t) dt \bigg\}^{1/k}<\infty.
\end{align*}
\end{itemize}
 \end{thm}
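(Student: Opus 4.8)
The plan is to reduce the bilinear inequality \eqref{EQ:Hardy_B} to an iteration of the classical one–dimensional weighted Hardy inequality, whose weight characterizations are known with explicit constants: the Muckenhoupt--Bradley--Talenti supremum condition in the range $p\le q$, and the corresponding integral condition in the range $q<p$. Writing $G_i(\tau):=\int_0^\tau F_i(x)\,dx$, the idea is to freeze $F_2$ and regard \eqref{EQ:Hardy_B} as a linear Hardy inequality for $F_1$ with the modified output weight $w_{F_2}(\tau):=G_2(\tau)^q\,\widetilde U(\tau)$. If $A(F_2)$ denotes the best constant of that frozen linear inequality, then by homogeneity the best constant in \eqref{EQ:Hardy_B} is $C=\sup_{F_2}A(F_2)\big/\|F_2\|_{p_2,\widetilde V_2}$, and the whole problem becomes one of computing this supremum in each parameter regime.

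I would treat case (i) first. Since $p_1\le q$, the frozen inequality obeys the supremum characterization $A(F_2)\asymp\sup_{t>0}(\int_t^\infty G_2^q\,\widetilde U)^{1/q}\,V_{11}(t)^{1/p_1'}$. As both the remaining optimization over $F_2$ and the supremum in $t$ are suprema, they may be interchanged, and for each fixed $t$ the quantity $\sup_{F_2}(\int_t^\infty G_2^q\,\widetilde U)^{1/q}\big/\|F_2\|_{p_2,\widetilde V_2}$ is itself the best constant of one more Hardy inequality for $F_2$, with output weight $\widetilde U\,\mathbf 1_{(t,\infty)}$ and $p_2\le q$. Applying the supremum characterization a second time and using monotonicity of $U_1,V_{11},V_{12}$ to collapse the nested suprema gives $C\asymp\mathcal B_1$; tracking the explicit Muckenhoupt constants through the two iterations yields the stated factor $8(1+4^q)^{1/q}$.

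Case (ii) proceeds by the same freezing, but now the inner inequality for $F_2$ lies in the range $q<p_2$ and is governed by the integral characterization with $\tfrac1{r_2}=\tfrac1q-\tfrac1{p_2}$. Inserting the output weight $\widetilde U\,\mathbf 1_{(T,\infty)}$ turns $\int_s^\infty\widetilde U\,\mathbf 1_{(T,\infty)}$ into $U_1(\max(s,T))$; splitting the resulting $s$–integral at $s=T$, the part over $(0,T)$ integrates explicitly (since $V_{12}'=\widetilde V_2^{\,1-p_2'}$) and, after the remaining supremum in $T$ weighted by $V_{11}(T)^{1/p_1'}$, reproduces $\mathcal B_1$ via the identity $\tfrac1{q'}+\tfrac1{r_2}=\tfrac1{p_2'}$, while the part over $(T,\infty)$ reproduces $\mathcal B_2$. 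Hence $C\asymp\max(\mathcal B_1,\mathcal B_2)$.

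The genuinely hard cases are (iii) and (iv), where $q<\min(p_1,p_2)$ so that \emph{both} frozen Hardy inequalities are of integral type. Then $A(F_2)$ is no longer a supremum but an $L^{r_1}$–type functional of $G_2$, and the outer optimization over $F_2$ cannot be obtained by merely interchanging suprema. The plan is to recognize $\int_t^\infty G_2^q\,\widetilde U$ as itself a Hardy-type object in $F_2$, to recast $C=\sup_{F_2}A(F_2)/\|F_2\|_{p_2,\widetilde V_2}$ as a further weighted Hardy-type inequality on the outer $L^{r_1}$ scale, and to invoke the scalar characterizations once more. In case (iii) the constraint $\tfrac1q\le\tfrac1{p_1}+\tfrac1{p_2}$ keeps the relevant outer exponent in the supremum regime, and the two asymmetric orders of freezing ($F_1$ first versus $F_2$ first) contribute $\mathcal B_2$ and $\mathcal B_3$, alongside $\mathcal B_1$; in case (iv) the strict inequality $\tfrac1q>\tfrac1{p_1}+\tfrac1{p_2}$ makes the exponent $k$ with $\tfrac1k=\tfrac1q-\tfrac1{p_1}-\tfrac1{p_2}$ positive and pushes every condition into the fully integral regime, producing $\mathcal B_4,\mathcal B_5,\mathcal B_6$. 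I expect the main obstacle to be precisely this double-integral regime: one must resolve the non-interchangeability of the $F_2$–optimization with the integral Hardy functional — most cleanly by passing to an associate (Köthe dual) norm together with a discretization/blocking argument — while simultaneously keeping careful control of the constants so as not to lose the two-sided bounds asserted in the statement.
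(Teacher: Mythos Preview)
The paper does not prove this theorem. It is introduced with the sentence ``Now, we state the following result proved in \cite{CSR}'' and is quoted as a known one-dimensional fact from Ca\~nestro--Ortega Salvador--Ram\'irez Torreblanca; the paper's only use of it is as a black box in the proof of Theorem~\ref{THM:Hardy3}, where the authors merely verify $\mathcal D_i=\mathcal B_i$ by rewriting the metric-space quantities in polar coordinates. There is therefore no ``paper's own proof'' to compare your proposal against.

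That said, your iteration strategy --- freeze $F_2$, apply the linear weighted Hardy characterization to the resulting inequality for $F_1$, and then optimize over $F_2$ --- is precisely the method of \cite{CSR}, and your outline for cases~(i) and~(ii) is correct in substance. For cases~(iii) and~(iv), however, what you have written is only a plan rather than a proof. Once $q<\min(p_1,p_2)$ the frozen best constant $A(F_2)$ is an $L^{r_1}$-type integral functional of $G_2$, and the outer optimization $\sup_{F_2}A(F_2)/\|F_2\|_{p_2,\widetilde V_2}$ is \emph{not} a standard Hardy inequality to which one can simply reapply the scalar characterization; you yourself flag this as ``the main obstacle'' and gesture toward an associate-norm or discretization argument without carrying it out or recovering the specific constants in the statement. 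In \cite{CSR} these cases require a substantially more delicate analysis (later streamlined in \cite{Krep} via a different iteration), so your discussion of (iii)--(iv) should be read as an indication of where the difficulty lies, not as a proof.
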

Our result in this section is that the conditions on the weight functions are found for the inequality \eqref{EQ:Hardy1} to hold true which reads out as follows:
\begin{thm}\label{THM:Hardy3}
Let $0<q<\infty$, $1<p_1,p_2<\infty,$ and $u$, $v_1$, $v_2$ are weight functions positive a.e. in $\mathbb{X}$ such that $u\in L_{loc}^1(\mathbb{X}\backslash\{a\})$ and $v_i^{1-p_i'}\in L_{loc}^1(\mathbb{X}),$ where $i=1,2.$ The inequality \eqref{EQ:Hardy1} holds for all $f_1,f_2 \geq 0$ if and only if 
\begin{itemize}
\item[(i)] For $1< \max(p_1,p_2) \leq q<\infty,$
\begin{align*}
&\mathcal D_1:= \sup_{x\not=a} \bigg\{U^\frac{1}{q}(x) V_1^\frac{1}{p_1'}(x)V_2^\frac{1}{p_2'}(x)\bigg\} <\infty.
\end{align*}
\item[(ii)]For $q \geq p_1$ and $q<p_2$ with $p_1,p_2,q>1$ and $\frac{1}{r_2}=\frac{1}{q}-\frac{1}{p_2},$ 
\begin{align*}
&\mathcal D_1 <\infty ,
\end{align*}
\begin{align*}
\mathcal D_2:=  \sup_{x\not=a} V_1^\frac{1}{p_1'}(x)\bigg\{\int_{\mathbb X\backslash{B(a,|x|_a )}}U^\frac{r_2}{q}(y) V_2^\frac{r_2}{q'}(y) {v_2}^{1-p_2'}(y)dy\bigg\}^\frac{1}{r_2}<\infty.
\end{align*}
\item[(iii)]For $q < p_1$ and $q<p_2$ with $p_1,p_2,q>1$ and $\frac{1}{q}\leq\frac{1}{p_1}+\frac{1}{p_2},$ and $\frac{1}{r_i}=\frac{1}{q}-\frac{1}{p_i}$  where $i=1,2,$
\begin{align*}
&\mathcal D_1<\infty, \, \mathcal D_2<\infty , \\
&{\mathcal D_3}:=  \sup_{x\not=a} V_2^\frac{1}{p_2'}(x)\bigg\{\int_{\mathbb X\backslash{B(a,|x|_a )}}U^\frac{r_1}{q}(y) V_1^\frac{r_1}{q'}(y) {v_1}^{1-p_1'}(y)dy\bigg\}^\frac{1}{r_1}<\infty.
\end{align*}
\item[(iv)]For $q < p_1$ and $q<p_2$ with $p_1,p_2,q>1$ and $\frac{1}{q}>\frac{1}{p_1}+\frac{1}{p_2},$ $\frac{1}{r_i}=\frac{1}{q}-\frac{1}{p_i},\, i=1,2,$ and $\frac{1}{k}=\frac{1}{q}-\frac{1}{p_1}-\frac{1}{p_2},$ 
\begin{align*}
&{\mathcal D_4}:=   \bigg\{\int_{\mathbb X} U^{\frac{k}{p_1}+\frac{k}{p_2}}(y) V_1^\frac{k}{p_1'}(y) V_2^\frac{k}{p_2'}(y)u(y)dy \bigg \}^\frac{1}{k}<\infty,
\end{align*}
\begin{align*}
&{\mathcal D_5}:=   \bigg\{\int_{\mathbb X} \bigg\{\int_{\mathbb X\backslash{B(a,|x|_a )}}U^\frac{r_2}{q}(y) V_2^\frac{r_2}{q'}(y) {v_2}^{1-p_2'}(y)dy\bigg\}^\frac{k}{r_2}\\&\quad\quad\quad \quad\quad \times V_1^\frac{k}{r_2'}(x){v_1}^{1-p_1'}(x)dx\bigg)^\frac{1}{k}<\infty,
\end{align*}
and
\begin{align*}
&{\mathcal D_6}:=  \bigg\{\int_{\mathbb X} \bigg\{\int_{\mathbb X\backslash{B(a,|x|_a )}}U^\frac{r_1}{q}(y) V_1^\frac{r_1}{q'}(y) {v_1}^{1-p_1'}(y)dy\bigg\}^\frac{k}{r_1}\\&\quad\quad\quad\quad\quad\times V_2^\frac{k}{r_1'}(x){v_2}^{1-p_2'}(x)dx\bigg\}^\frac{1}{k}<\infty.
\end{align*}
\end{itemize}
\end{thm}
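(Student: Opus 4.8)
The plan is to leverage Theorem~\ref{THM:Hardy1}, which I have already established, to transfer the weight characterization of Theorem~\ref{THM:Hardy2} (the one-dimensional result of \cite{CSR}) directly onto the metric-measure-space inequality \eqref{EQ:Hardy1}. Theorem~\ref{THM:Hardy1} tells me that \eqref{EQ:Hardy1} holds with constant $C$ if and only if the one-dimensional inequality \eqref{EQ:Hardy_B} holds with the \emph{same} constant $C$, where the reduced weights are $\widetilde U$ and $\widetilde V_i$ defined in \eqref{udef} and \eqref{vi}. So the entire task reduces to rewriting each of the scalar conditions $\mathcal B_1,\dots,\mathcal B_6$ from Theorem~\ref{THM:Hardy2}, which are stated in terms of $\widetilde U,\widetilde V_i$ and their integrals $U_1,V_{1i}$, in terms of the genuine space-level quantities $U,V_i,u,v_i$ appearing in the $\mathcal D_j$.

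The key computational step, which I would carry out first, is to identify $U_1$ with $U$ and $V_{1i}$ with $V_i$ as functions on $\mathbb X$ pulled back through the radial variable. Concretely, using the polar decomposition \eqref{EQ:polar} and the definitions \eqref{udef}, \eqref{vi}, I would verify the identities
\begin{align*}
U_1(t)=\int_t^\infty \widetilde U(s)\,ds=\int_{\mathbb X\backslash B(a,t)}u(y)\,dy=U(x)\big|_{|x|_a=t},
\end{align*}
and, crucially from \eqref{vi}, that $\widetilde V_i^{\,1-p_i'}(s)=\int_{\Sigma_s}v_i^{1-p_i'}(s,\omega)\lambda(s,\omega)\,d\omega$, so that
\begin{align*}
V_{1i}(t)=\int_0^t \widetilde V_i^{\,1-p_i'}(s)\,ds=\int_{B(a,t)}v_i^{1-p_i'}(y)\,dy=V_i(x)\big|_{|x|_a=t}.
\end{align*}
The cancellation of the $(1-p_i)$ exponent in \eqref{vi} against the $1-p_i'$ exponent is what makes $V_{1i}$ collapse cleanly to the radial profile of $V_i$; this algebraic simplification is the heart of the translation and is the one place where I would check signs and exponents carefully.

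Once these two identities are in hand, each supremum over $0<t<\infty$ becomes a supremum over $x\neq a$ (since the integrand depends on $x$ only through $|x|_a$), and each outer radial integral $\int_t^\infty(\cdots)\widetilde V_i^{\,1-p_i'}\,ds$ or $\int_0^\infty(\cdots)\widetilde U\,dt$ unfolds, again via \eqref{EQ:polar}, into the corresponding integral $\int_{\mathbb X\backslash B(a,|x|_a)}(\cdots)v_i^{1-p_i'}\,dy$ or $\int_{\mathbb X}(\cdots)u\,dy$ over $\mathbb X$. Matching term by term then yields $\mathcal B_1\asymp\mathcal D_1$, $\mathcal B_2\asymp\mathcal D_2$, $\mathcal B_3\asymp\mathcal D_3$, and similarly $\mathcal B_4\asymp\mathcal D_4$, $\mathcal B_5\asymp\mathcal D_5$, $\mathcal B_6\asymp\mathcal D_6$, so that finiteness of the $\mathcal B$'s is equivalent to finiteness of the corresponding $\mathcal D$'s in each of the four parameter regimes (i)--(iv). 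I would present this as four short cases mirroring Theorem~\ref{THM:Hardy2}, in each invoking the equivalence \eqref{EQ:Hardy1}$\Leftrightarrow$\eqref{EQ:Hardy_B} together with the weight-profile identities.

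I expect the main obstacle to be purely bookkeeping rather than conceptual: keeping the exponents $r_i$, $k$, $p_i'$ and the primed conjugates straight while substituting $\widetilde U,\widetilde V_i$ into the more involved nested integrals of $\mathcal B_5,\mathcal B_6$, and confirming that the inner integral $\int_t^\infty \widetilde U^{r_2/q}(s)V_{12}^{r_2/q'}(s)\widetilde V_2^{\,1-p_2'}(s)\,ds$ really does reproduce the set $\mathbb X\backslash B(a,|x|_a)$ integral defining the inner bracket of $\mathcal D_5$ after the identifications $\widetilde U\leftrightarrow$ (radial density of $U$'s generating measure) and $V_{12}\leftrightarrow V_2$. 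A subtle point worth flagging is that $\widetilde U(s)\,ds$ is the radial push-forward of $u(y)\,dy$ while $U_1,U$ are its tail integrals, so in the innermost integrands one must distinguish the \emph{density} $\widetilde U(s)$ (matching $u(y)\,dy$ under \eqref{EQ:polar}) from the \emph{cumulative} $U_1(t)=U(x)$; getting this distinction right in $\mathcal D_4$ (where $\widetilde U(t)\,dt$ becomes $u(y)\,dy$ but $U_1$ becomes $U$) is the only place an error could creep in, and I would verify it explicitly.
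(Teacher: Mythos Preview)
Your proposal is correct and follows essentially the same route as the paper: both arguments reduce to showing $\mathcal D_j=\mathcal B_j$ via the polar-decomposition identities $U_1(t)=U(x)\big|_{|x|_a=t}$ and $V_{1i}(t)=V_i(x)\big|_{|x|_a=t}$ (using exactly the exponent cancellation you flag), and then invoke Theorem~\ref{THM:Hardy2} together with Theorem~\ref{THM:Hardy1}. The only cosmetic difference is that the paper obtains exact equalities $\mathcal D_j=\mathcal B_j$ rather than the $\asymp$ you write, but since only finiteness of these quantities is at stake this is immaterial.
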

\begin{proof}
First we will show that the conditions $\mathcal D_i$ are equivalent to the conditions $B_i$ in Theorem \ref{THM:Hardy2} for $i=1$ to $6.$ Then the result follows from Theorem \ref{THM:Hardy2}. \\
Let us first show that $\mathcal D_1=\mathcal B_1.$
\begin{align*}
&  \mathcal D_1:= \sup_{x\not=a} \bigg\{U^\frac{1}{q}(x) V_1^\frac{1}{p_1'}(x)V_2^\frac{1}{p_2'}(x)\bigg\}\\ \nonumber 
&= \sup_{x\not=a}\bigg( \int_{\mathbb X\backslash{B(a,|x|_a )}} u(y) dy\bigg)^{1/q}\bigg(\int_{B(a,|x|_a  )}v_1^{1-p_1'}(y)dy\bigg)^{1/p_1'}\bigg(\int_{B(a,|x|_a  )}v_2^{1-p_2'}(y)dy\bigg)^{1/p_2'}\\ \nonumber
&=\sup_{x\not=a} \bigg(\int_{\vert x \vert_a}^{\infty}\int_{\Sigma_r}u(r,\omega)\lambda(r,\omega)d\omega_r dr\bigg)^{1/q}\bigg(\int_0^{\vert x \vert_a}\int_{\Sigma_r}v_1^{1-p_1'}(r,\omega) \lambda(r,\omega)d\omega_r dr\bigg)^{1/p_1'}\\ \nonumber 
&\quad\quad\quad\quad\quad\times \bigg(\int_0^{\vert x \vert_a}\int_{\Sigma_r}v_2^{1-p_2'}(r,\omega) \lambda(r,\sigma)d\omega_r dr\bigg)^{1/p_2'}\\ \nonumber
&=\sup_{x\not=a}\bigg(\int_{\vert x \vert_a}^{\infty}\widetilde{U}(r)dr \bigg)^{1/q}\bigg(\int_0^{\vert x \vert_a} \widetilde{V_1}^{1/(1-p_1)}(r)dr\bigg)^{1/p_1'} \bigg(\int_0^{\vert x \vert_a} \widetilde{V_2}^{1/(1-p_2)}(r)dr\bigg)^{1/p_2'}\\ \nonumber
&=\sup_{0<{\vert x \vert}_a<\infty}\bigg(\int_{\vert x \vert_a}^{\infty}\widetilde{U}(r)dr \bigg)^{1/q}\bigg(\int_0^{\vert x \vert_a} \widetilde{V_1}^{(1-p_1')}(r)dr\bigg)^{1/p_1'} \bigg(\int_0^{\vert x \vert_a} \widetilde{V_2}^{(1-p_2')}(r)dr\bigg)^{1/p_2'}\\ \nonumber
&=\mathcal B_1.
\end{align*}
Now, we prove that $\mathcal D_2=\mathcal B_2.$ We have
\begin{align*}
&\mathcal D_2:=  \sup_{x\not=a} {V}_1^\frac{1}{p_1'}(x)\bigg\{\int_{\mathbb X\backslash{B(a,|x|_a )}}{U}^\frac{r_2}{q}(y) {V}_2^\frac{r_2}{q'}(y) {v_2}^{1-p_2'}(y)dy\bigg\}^\frac{1}{r_2}\\ \nonumber
&=\sup_{x\not=a} \bigg(\int_{B(a,|x|_a  )}v_1^{1-p_1'}(y)dy\bigg)^{1/p_1'}\bigg\{ \int_{\mathbb X\backslash{B(a,|x|_a )}}\bigg( \int_{\mathbb X\backslash{B(a,|y|_a )}} u(z) dz\bigg)^{r_2/q}\\ \nonumber
& \quad\times\bigg(\int_{B(a,|y|_a  )}v_2^{1-p_2'}(z)dz\bigg)^{r_2/q'}{v_2}^{1-p_2'}(y)dy\bigg\}^{1/r_2}\\ \nonumber
&=\sup_{x\not=a}\bigg(\int_0^{\vert x \vert_a}\int_{\Sigma_r}v_1^{1-p_1'}(r,\omega) \lambda(r,\sigma)d\omega_r dr\bigg)^{1/p_1'}\bigg(\int_{\vert x \vert_a}^{\infty}\int_{\Sigma_t}\bigg(\int_t^{\infty}\int_{\Sigma_r}u(r,\omega)\lambda(r,\omega)d\omega_r dr\bigg)^{r_2/q}\\ \nonumber
& \quad \quad \times\bigg(\int_0^t \int_{\Sigma_r}v_2^{1-p_2'}(r,\omega) \lambda(r,\sigma)d\omega_r dr\bigg)^{r_2/q'}{v_2}^{1-p_2'}(t,\rho)\lambda(t,\rho)d\rho_t dt\bigg)^{1/r_2} \\ \nonumber
&=\sup_{x\not=a}\bigg(\int_0^{\vert x \vert_a}\widetilde{V_1}^{1/(1-p_1)}(r)dr \bigg)^{1/p_1'}\bigg(\int_{\vert x \vert_a}^{\infty}\bigg(\int_t^{\infty} \widetilde{U}(r)dr
\bigg)^{r_2/q}\\ \nonumber
& \quad \quad \quad \times\bigg(\int_0^t \widetilde{V_2}^{1/(1-p_2)}(r)dr \bigg)^{r_2/q'} {\widetilde{V}_2}^{1-p_2'}(t)dt \bigg)^{1/r_2}\\ \nonumber
&=\sup_{0<{\vert x \vert}_a<\infty}\bigg(\int_0^{\vert x \vert_a}\widetilde{V_1}^{(1-p_1')}(r)dr \bigg)^{1/p_1'}\bigg(\int_{\vert x \vert_a}^{\infty}\bigg(\int_t^{\infty} \widetilde{U}(r)dr
\bigg)^{r_2/q}\\ \nonumber
& \quad \quad \quad \times\bigg(\int_0^t \widetilde{V_2}^{(1-p_2')}(r)dr \bigg)^{r_2/q'} {\widetilde{V}_2}^{1-p_2'}(t)dt \bigg)^{1/r_2}\\ \nonumber
&=\mathcal B_2.
\end{align*}
Next we prove $\mathcal D_3=\mathcal B_3,$ and the result for
$\mathcal D_4=\mathcal B_4$ is similar.
\begin{align*}
&\mathcal D_3:=  \sup_{x\not=a} {V}_2^\frac{1}{p_2'}(x)\bigg\{\int_{\mathbb X\backslash{B(a,|x|_a )}}{U}^\frac{r_1}{q}(y) {V}_1^\frac{r_1}{q'}(y) {v_1}^{1-p_1'}(y)dy\bigg\}^\frac{1}{r_1}\\ \nonumber
&=\sup_{x\not=a} \bigg(\int_{B(a,|x|_a  )}v_2^{1-p_2'}(y)dy\bigg)^{1/p_2'}\bigg\{ \int_{\mathbb X\backslash{B(a,|x|_a )}}\bigg( \int_{\mathbb X\backslash{B(a,|y|_a )}} u(z) dz\bigg)^{r_1/q}\\ \nonumber
& \quad\times\bigg(\int_{B(a,|y|_a  )}v_1^{1-p_1'}(z)dz\bigg)^{r_1/q'}{v_1}^{1-p_1'}(y)dy\bigg\}^{1/r_1}\\ \nonumber
&=\sup_{x\not=a}\bigg(\int_0^{\vert x \vert_a}\int_{\Sigma_r}v_2^{1-p_2'}(r,\omega) \lambda(r,\sigma)d\omega_r dr\bigg)^{1/p_2'}\bigg(\int_{\vert x \vert_a}^{\infty}\int_{\Sigma_t}\bigg(\int_t^{\infty}\int_{\Sigma_r}u(r,\omega)\lambda(r,\omega)d\omega_r dr\bigg)^{r_1/q}\\ \nonumber
& \quad \quad \times\bigg(\int_0^t \int_{\Sigma_r}v_1^{1-p_1'}(r,\omega) \lambda(r,\sigma)d\omega_r dr\bigg)^{r_1/q'}{v_1}^{1-p_1'}(t,\rho)\lambda(t,\rho)d\rho_t dt\bigg)^{1/r_1} \\ \nonumber
&=\sup_{x\not=a}\bigg(\int_0^{\vert x \vert_a}\widetilde{V_2}^{1/(1-p_2)}(r)dr \bigg)^{1/p_2'}\bigg(\int_{\vert x \vert_a}^{\infty}\bigg(\int_t^{\infty} \widetilde{U}(r)dr
\bigg)^{r_1/q}\\ \nonumber
& \quad \quad \quad \times\bigg(\int_0^t \widetilde{V_1}^{1/(1-p_1)}(r)dr \bigg)^{r_1/q'} 
\widetilde{V_1}^{1/(1-p_1)}(t)dt \bigg)^{1/r_1}\\ \nonumber
&=\sup_{0<{\vert x \vert}_a<\infty}\bigg(\int_0^{\vert x \vert_a}\widetilde{V_2}^{(1-p_2')}(r)dr \bigg)^{1/p_2'}\bigg(\int_{\vert x \vert_a}^{\infty}\bigg(\int_t^{\infty} \widetilde{U}(r)dr
\bigg)^{r_1/q}\\ \nonumber
& \quad \quad \quad \times\bigg(\int_0^t \widetilde{V_1}^{(1-p_1')}(r)dr \bigg)^{r_1/q'} \widetilde{V_1}^{(1-p_1')}(t)dt \bigg)^{1/r_1}\\ \nonumber
&=\mathcal B_3.
\end{align*}
Finally, we prove that $\mathcal D_5=\mathcal B_5,$ and the result for
$\mathcal D_6=\mathcal B_6$ follows similar lines.
\begin{align*}
&{\mathcal D_5}:=  \bigg\{\int_{\mathbb X} \bigg\{\int_{\mathbb X\backslash{B(a,|x|_a )}}U^\frac{r_2}{q}(y) V_2^\frac{r_2}{q'}(y) {v_2}^{1-p_2'}(y)dy\bigg\}^\frac{k}{r_2}\\&\quad\quad\quad \quad\quad V_1^\frac{k}{r_2'}(x){v_1}^{1-p_1'}(x)dx\bigg\}^\frac{1}{k}\\ \nonumber\
&=\bigg(\int_0^\infty \int_{\Sigma_t} \bigg(\int_s^{\infty} \int_{\Sigma_r}\bigg(\int_t^{\infty}\int_{\Sigma_r}u(r,\omega)\lambda(r,\omega)d\omega_r dr\bigg)^{r_2/q}\\ \nonumber
& \quad \quad \times\bigg(\int_0^t \int_{\Sigma_r}v_2^{1-p_2'}(r,\omega) \lambda(r,\sigma)d\omega_r dr\bigg)^{r_2/q'}{v_2}^{1-p_2'}(t,\rho)\lambda(t,\rho)d\rho_t dt\bigg)^{k/r_2}\\ \nonumber
&\quad\times\bigg(\int_0^s \int_{\Sigma_r}v_1^{1-p_1'}(r,\omega) \lambda(r,\omega)d\omega_r dr\bigg)^{k/r_2'}{v_1}^{1-p_1'}(s,\rho)\lambda(s,\rho)d\rho_s ds\bigg)^{1/k}\\ \nonumber
&=\bigg(\int_0^\infty \bigg(\int_s^{\infty}\bigg(\int_t^{\infty} \widetilde{U}(r)dr\bigg)^{r_2/q}\bigg(\int_0^t \widetilde{V_2}^{1/(1-p_2)}(r)dr\bigg)^{r_2/q'}\widetilde{V_2}^{1/(1-p_2)}(r)dt\bigg)^{k/r_2}\\ \nonumber
& \quad\times \bigg(\int_0^s \widetilde{V_1}^{1/(1-p_1)}(r)dr\bigg)^{k/r_2'}\widetilde{V_1}^{1/(1-p_1)}(s)ds\bigg)^{1/k}\\ \nonumber
&=\mathcal B_5.
\end{align*}
Now, Thoerem \ref{THM:Hardy3} follows from Theorem \ref{THM:Hardy2}.
\end{proof}
\section{Applications and examples}\label{sec5}
 In this section, we give several examples of applications of our results to characterize the weights $u$, $v_1$ and $v_2$ for bilinear Hardy inequalities to hold. 
 \subsection{Homogeneous Lie groups}  Let $\mathbb X=\mathbb G$ be a homogeneous Lie group. For more information on these groups, we refer to the references \cite{FR, FS} and \cite{RS}. In this case, we have that $ \lambda (r,\omega)= {r}^{Q-1}$, with $Q$ being the homogeneous dimension of the group, satisfies \eqref{EQ:polar}. We also fix
 $a=0$  to be the identity element of the group $\mathbb G$ which does not cause any loss of generality. The notation is being made simpler by denoting $\vert x \vert_a$ by $\vert x \vert$. We observe that this goes well with the notation for the quasi-norm $|\cdot|$ on a homogeneous Lie group $\mathbb G$.
Let us select the power weights as
$$u(x)= {\vert x \vert}^\alpha, v_1(x)= {\vert x \vert }^{\beta_1},\textrm{ and }v_2(x)= {\vert x \vert }^{\beta_2}.$$
Then the inequality \eqref{EQ:Hardy1} holds for $1< \max(p_1,p_2) \leq q<\infty$ if and only if 
\begin{align}
    \nonumber&{\mathcal{D}_1}=\sup_{r>0} \bigg(\displaystyle \sigma\int_{r}^\infty {\rho}^\alpha {\rho}^{Q-1}d\rho \bigg)^\frac{1}{q} \bigg(\displaystyle \sigma \int_{0}^r{\rho}^{{\beta_1}(1-{p'_1})} {\rho}^{Q-1}d\rho\bigg)^\frac{1}{{p'_1}}\\&
\quad\quad\quad\quad\quad\times\bigg(\displaystyle \sigma \int_{0}^r{\rho}^{{\beta_2}(1-{p'_2})} {\rho}^{Q-1}d\rho\bigg)^\frac{1}{{p'_2}}<\infty,
\end{align}
where $\sigma$ is the area of the unit sphere in $\mathbb G$ with respect to the quasi-norm $|\cdot|.$ For this supremum to be well-defined we need to have  $\alpha+Q<0,$ ${\beta_1}(1-{p'_1})+Q>0$ and ${\beta_2}(1-{p'_2})+Q>0.$ Then we have
\begin{align}
\nonumber&{\mathcal {D}_1}=\sigma^{(\frac1q+\frac{1}{p_1'}+\frac{1}{p_2'})}\sup_{r>0}
\bigg(\displaystyle\int_{r}^\infty{\rho}^{\alpha+Q-1}d\rho\bigg)^\frac{1}{q}\bigg(\displaystyle\int_0^{r} {\rho}^{\beta_1(1-p_1')+Q-1}d\rho \bigg)^\frac{1}{p_1'}\\&\quad\quad\quad\quad\quad\nonumber\times\bigg(\displaystyle\int_0^{r} {\rho}^{{\beta_2}(1-p_2')+Q-1}d\rho \bigg)^\frac{1}{p_2'} \\&
= \sigma^{(\frac1q+\frac{1}{p'}+\frac{1}{p_2'})}
\sup_{r>0} \frac {{r}^ \frac{\alpha+Q}{q}}{{|\alpha+Q|}^\frac{1}{q}}
\frac{ {r}^\frac{\beta_1(1-p_1')+Q} {p_1'}}{{(\beta_1(1-p_1')+Q)}^\frac{1}{p_1'}}\frac{ {r}^\frac{{\beta_2}(1-p_2')+Q} {p_2'}}{{({\beta_2}(1-p_2')+Q)}^\frac{1}{p_2'}},
\end{align} 
which is finite if and only if the power of $r$ is zero.
Therefore, we obtain the following result:

\begin{cor}\label{COR:hom}
Let $\mathbb G$ be a homogeneous Lie group with a quasi-norm $|\cdot|$ and homogeneous dimension $Q$. Let $1< \max(p_1,p_2) \leq q<\infty$ and let $\alpha,\beta_1,\beta_2 \in\mathbb R$. Then the inequality
\begin{align}\label{EQ:Hom}
\nonumber&\bigg(\int_\mathbb G\bigg(H_2^B(f_1,f_2)\bigg)^q {\vert x \vert}^{\alpha}dx\bigg)^\frac{1}{q}\le  C\bigg\{\int_{\mathbb G} {\vert f_1(x) \vert}^{p_1}{\vert x \vert}^{\beta_1}dx\bigg\}^{\frac1{p_1}}\\&\quad\quad\quad\quad\quad\quad \quad\quad\quad\quad\times \bigg\{\int_{\mathbb G} {\vert f_2(x) \vert}^{p_2}{\vert x \vert}^{\beta_2}dx\bigg\}^{\frac1{p_2}}
\end{align}
holds for all measurable functions $f:\mathbb G\to{\mathbb C}$  if and only if 
$$\alpha+Q<0,\,\beta_1(1-p_1')+Q>0,\, {\beta_2}(1-p_2')+Q>0$$ and 
$$\frac{\alpha+Q}{q}+\frac{\beta_1(1-p_1')+Q} {p_1'}+\frac{{\beta_2}(1-p_2')+Q} {p_2'}=0.$$
\end{cor}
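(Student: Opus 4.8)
The plan is to read off inequality \eqref{EQ:Hom} as the special case of \eqref{EQ:Hardy1} in which $u(x)=|x|^{\alpha}$, $v_1(x)=|x|^{\beta_1}$ and $v_2(x)=|x|^{\beta_2}$, and then invoke the weight characterisation already established in Theorem~\ref{THM:Hardy3}. Since the exponents satisfy $1<\max(p_1,p_2)\le q<\infty$, it is part~(i) of that theorem that applies: the inequality holds for all $f_1,f_2\ge0$ precisely when $\mathcal D_1=\sup_{x\neq a}U^{1/q}(x)\,V_1^{1/p_1'}(x)\,V_2^{1/p_2'}(x)<\infty$, with $U$ and $V_i$ as in \eqref{EQ:U}--\eqref{EQ:V}. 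The passage from nonnegative $f_i$ to general measurable $f\colon\mathbb{G}\to\mathbb{C}$ is immediate upon replacing $f_i$ by $|f_i|$, so the whole corollary reduces to computing $\mathcal D_1$ for these power weights and deciding when it is finite.

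First I would substitute the polar decomposition \eqref{EQ:polar} with $\lambda(r,\omega)=r^{Q-1}$, fixing $a=0$ and writing $|x|$ for $|x|_a$. Because the weights and $\lambda$ depend only on the radius, the angular integration over each sphere $\Sigma_r$ produces the surface area $\sigma$ of the unit sphere as a multiplicative constant, and the radial parts collapse to $U(x)=\sigma\int_{|x|}^{\infty}\rho^{\alpha+Q-1}\,d\rho$ and $V_i(x)=\sigma\int_0^{|x|}\rho^{\beta_i(1-p_i')+Q-1}\,d\rho$. Thus $\mathcal D_1$ becomes a fixed power of $\sigma$ times a supremum over $r=|x|>0$ of the product of three elementary radial integrals, exactly as in the computation displayed before the statement.

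The heart of the argument is then a short two-stage analysis of these integrals. Convergence is the first gate: $\int_r^{\infty}\rho^{\alpha+Q-1}\,d\rho$ is finite iff $\alpha+Q<0$, and each $\int_0^{|x|}\rho^{\beta_i(1-p_i')+Q-1}\,d\rho$ is finite iff $\beta_i(1-p_i')+Q>0$; should any of these fail, the corresponding $U$ or $V_i$ is identically $+\infty$, forcing $\mathcal D_1=\infty$, so the three sign conditions are genuinely necessary and not mere conveniences. Under them, each integral equals a constant times a power of $r$, and collecting the powers gives $\mathcal D_1=(\mathrm{const})\cdot\sup_{r>0}r^{E}$ with $E=\tfrac{\alpha+Q}{q}+\tfrac{\beta_1(1-p_1')+Q}{p_1'}+\tfrac{\beta_2(1-p_2')+Q}{p_2'}$.

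The main (and essentially the only) obstacle is the final homogeneity step: since $r^{E}\to\infty$ as $r\to\infty$ when $E>0$ and as $r\to0^{+}$ when $E<0$, the supremum is finite if and only if $E=0$, which is exactly the stated balance equation. I would stress that this scaling argument is what converts the three one-sided inequalities into a single equality, and that the strict sign conditions must be recorded separately because they are what make the integrals defining $U$ and $V_i$ meaningful. Combining the sign conditions with $E=0$ and appealing to Theorem~\ref{THM:Hardy3}(i) completes the proof.
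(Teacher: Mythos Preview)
Your proposal is correct and follows essentially the same route as the paper: specialise to power weights on $\mathbb{G}$, invoke Theorem~\ref{THM:Hardy3}(i) to reduce to the finiteness of $\mathcal D_1$, pass to polar coordinates with $\lambda(r,\omega)=r^{Q-1}$, and then read off the three sign conditions from convergence of the radial integrals and the balance equation from the requirement that the resulting power of $r$ vanish. If anything, your write-up is slightly more explicit than the paper's in justifying why the sign conditions are genuinely necessary and why only $E=0$ survives the two-sided scaling test.
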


\vspace{0.5cm}

 Consider now the condition $(ii)$ of Theorem \ref{THM:Hardy3} with the power weights $$u(x)= {\vert x \vert}^\alpha, v_1(x)= {\vert x \vert }^{\beta_1},\textrm{ and }v_2(x)= {\vert x \vert }^{\beta_2}.$$Let  $q\ge p_1,$ $q<p_2,$ with  $p_1,p_2,q>1$ and $\frac{1}{r_2}=\frac{1}{q}-\frac{1}{p_2},$
then the inequality \eqref{EQ:Hardy1} holds for  $\mathcal{D}_1<\infty$ and 
\begin{align}
\nonumber&\mathcal{D}_2=\sigma^{\frac{1}{p'_1}+\frac{1}{q}+\frac{1}{q'}+\frac{1}{r_2}}\sup_{r>0}\left(\int_0^r \rho^{\beta_1(1-p'_1)+Q-1}d\rho\right)^{\frac{1}{p'_1}}\\&\left(\int_r^\infty \left(\int_\rho^\infty s^{\alpha+Q-1}ds\right)^{\frac{r_2}{q}}\left(\int_0^\rho s^{\beta_2(1-p'_2)+Q-1}ds\right)^{\frac{r_2}{q'}}\rho^{\beta_2(1-p'_2)+Q-1}d\rho\right)^\frac{1}{r_2}<\infty,
\end{align}
where $\sigma$ is the area of the unit sphere in $\mathbb G$ with respect to the quasi-norm $|\cdot|.$ For this supremum to be well-defined we need to have $\beta_1(1-p'_1)+Q>0,$ $\alpha+Q<0,$ $\beta_2(1-p'_2)+Q>0$ and $\frac{(\alpha+Q)r_2}{q}+\frac{(\beta_2(1-p'_2)+Q)r_2}{q'}+\beta_2(1-p'_2)+Q<0.$
 Then we have
\begin{align}
\nonumber&\mathcal{D}_2=\sigma^{\frac{1}{p'_1}+\frac{1}{q}+\frac{1}{q'}+\frac{1}{r_2}}\sup_{r>0}\frac{r^{\frac{\beta_1(1-p'_1)+Q}{p'_1}}}{(\beta_1(1-p'_1)+Q)^{\frac{1}{p'_1}}}\\&\left(\int_r^\infty\frac{\rho^{\frac{(\alpha+Q)r_2}{q}}}{|\alpha+Q|^{\frac{r_2}{q}}}\,\,\frac{\rho^{\frac{(\beta_2(1-p'_2)+Q)r_2}{q'}}}{(\beta_2(1-p'_2)+Q)^{\frac{r_2}{q'}}}\,\,\rho^{\beta_2(1-p'_2)+Q-1}\,d\rho\right)^\frac{1}{r_2}\nonumber,
\end{align}
which gives
\begin{align}
\nonumber&\mathcal{D}_2=\sigma^{\frac{1}{p'_1}+\frac{1}{q}+\frac{1}{q'}+\frac{1}{r_2}}\sup_{r>0}\frac{r^{\frac{\beta_1(1-p'_1)+Q}{p'_1}}}{(\beta_1(1-p'_1)+Q)^{\frac{1}{p'_1}}}\times\\&\frac{1}{|\alpha+Q|^{\frac{1}{q}}\,(\beta_2(1-p'_2)+Q)^{\frac{1}{q'}}}\,\frac{r^{\frac{(\alpha+Q)}{q}+\frac{\beta_2(1-p'_2)+Q}{q'}+\frac{\beta_2(1-p'_2)+Q}{r_2}}}{\left[\frac{(\alpha+Q)r_2}{q}+\frac{(\beta_2(1-p'_2)+Q)r_2}{q'}+(\beta_2(1-p'_2)+Q)\right]^\frac{1}{r_2}}\nonumber\\&\nonumber=\sigma^{\frac{1}{p'_1}+\frac{1}{q}+\frac{1}{q'}+\frac{1}{r_2}}\frac{1}{|\alpha+Q|^{\frac{1}{q}}\,(\beta_2(1-p'_2)+Q)^{\frac{1}{q'}}}\sup_{r>0}\frac{r^{\frac{(\alpha+Q)}{q}+\frac{\beta_2(1-p'_2)+Q}{q'}+\frac{\beta_2(1-p'_2)+Q}{r_2}+\frac{\beta_1(1-p'_1)+Q}{p'_1}}}{\left[\frac{(\alpha+Q)r_2}{q}+\frac{(\beta_2(1-p'_2)+Q)r_2}{q'}+(\beta_2(1-p'_2)+Q)\right]^\frac{1}{r_2}}
\end{align}
which is finite if and only if the power of $r$ is zero. Therefore, \begin{align}\label{5.5}
\frac{(\alpha+Q)}{q}+\frac{\beta_2(1-p'_2)+Q}{q'}+\frac{\beta_2(1-p'_2)+Q}{r_2}+\frac{\beta_1(1-p'_1)+Q}{p'_1}=0,
\end{align} with the condition $\mathcal{D}_1$ from Corollary \ref{COR:hom},
\begin{align}\label{5.6}
\frac{\alpha+Q}{q}+\frac{\beta_1(1-p_1')+Q} {p_1'}+\frac{{\beta_2}(1-p_2')+Q} {p_2'}=0.\end{align}
Solving \eqref{5.5} and \eqref{5.6}, we have 
\begin{align}
\frac{1}{q'}+\frac{1}{r_2}=\frac{1}{p'_2}\implies\frac{1}{r_2}=\frac{1}{q}-\frac{1}{p_2},
\end{align}
which is our given condition of $(ii).$
Therefore, we obtain the following result:
\begin{cor}
	Let $\mathbb G$ be a homogeneous Lie group with a quasi-norm $|\cdot|$ and homogeneous dimension $Q$. Let $q\ge p_1$ and $q<p_2$ with $p_1,p_2,q>1$ and $\frac{1}{r_2}=\frac{1}{q}-\frac{1}{p_2,}$ and let $\alpha,\beta_1,\beta_2 \in\mathbb R$. Then the inequality
	\begin{align}\label{EQ:Hom}
	\nonumber&\bigg(\int_\mathbb G\bigg(H_2^B(f_1,f_2)\bigg)^q {\vert x \vert}^{\alpha}dx\bigg)^\frac{1}{q}\le  C\bigg\{\int_{\mathbb G} {\vert f_1(x) \vert}^{p_1}{\vert x \vert}^{\beta_1}dx\bigg\}^{\frac1{p_1}}\\&\quad\quad\quad\quad\quad\quad \quad\quad\quad\quad\times \bigg\{\int_{\mathbb G} {\vert f_2(x) \vert}^{p_2}{\vert x \vert}^{\beta_2}dx\bigg\}^{\frac1{p_2}}
	\end{align}
	holds for all measurable functions $f:\mathbb G\to{\mathbb C}$  if and only if $\beta_1(1-p'_1)+Q>0,$ $\alpha+Q<0,$ $\beta_2(1-p'_2)+Q>0,$  $\frac{(\alpha+Q)r_2}{q}+\frac{(\beta_2(1-p'_2)+Q)r_2}{q'}+\beta_2(1-p'_2)+Q<0$ and $\frac{\alpha+Q}{q}+\frac{\beta_1(1-p_1')+Q} {p_1'}+\frac{{\beta_2}(1-p_2')+Q} {p_2'}=0.$
\end{cor}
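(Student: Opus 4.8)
The plan is to apply part $(ii)$ of Theorem \ref{THM:Hardy3}, which asserts that under the index constraints $q \geq p_1$, $q < p_2$ with $p_1,p_2,q>1$ and $\frac{1}{r_2} = \frac{1}{q} - \frac{1}{p_2}$, the inequality \eqref{EQ:Hardy1} holds for all $f_1, f_2 \geq 0$ if and only if both $\mathcal{D}_1 < \infty$ and $\mathcal{D}_2 < \infty$. The task therefore reduces to evaluating these two quantities explicitly for the power weights $u(x) = |x|^\alpha$, $v_1(x) = |x|^{\beta_1}$, $v_2(x) = |x|^{\beta_2}$ on the homogeneous Lie group $\mathbb{G}$.

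First I would treat $\mathcal{D}_1$. Using the polar decomposition \eqref{EQ:polar} with $\lambda(r,\omega) = r^{Q-1}$, the quantities $U$, $V_1$, $V_2$ from \eqref{EQ:U} and \eqref{EQ:V} collapse to one-dimensional radial integrals, each carrying a factor $\sigma$ equal to the surface area of the unit sphere. This is exactly the computation already performed for Corollary \ref{COR:hom}: convergence of $\int_r^\infty \rho^{\alpha+Q-1}\,d\rho$ forces $\alpha+Q<0$, convergence of $\int_0^r \rho^{\beta_i(1-p_i')+Q-1}\,d\rho$ at the origin forces $\beta_i(1-p_i')+Q>0$ for $i=1,2$, and once these integrals are evaluated the remaining supremum over $r>0$ is finite if and only if the total power of $r$ vanishes, i.e. \eqref{5.6} holds.

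Next I would compute $\mathcal{D}_2$, which is the only genuinely new ingredient. Substituting the power weights and applying \eqref{EQ:polar} again reduces $\mathcal{D}_2$ to a nested radial integral, which I would evaluate from the inside out. The innermost integral $\int_\rho^\infty s^{\alpha+Q-1}\,ds$ again requires $\alpha+Q<0$; the factor $\int_0^\rho s^{\beta_2(1-p_2')+Q-1}\,ds$ requires $\beta_2(1-p_2')+Q>0$; and the resulting outer integral over $\rho$ converges at infinity precisely when its exponent is negative, namely $\frac{(\alpha+Q)r_2}{q} + \frac{(\beta_2(1-p_2')+Q)r_2}{q'} + \beta_2(1-p_2')+Q < 0$. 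Performing these integrations leaves a single power of $r$ inside the supremum, and $\mathcal{D}_2 < \infty$ holds if and only if that exponent is zero, which yields the balance relation \eqref{5.5}.

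Finally I would close the argument by checking consistency of the two homogeneity relations: subtracting \eqref{5.6} from \eqref{5.5} leaves $\frac{1}{q'} + \frac{1}{r_2} = \frac{1}{p_2'}$, which is exactly the standing hypothesis $\frac{1}{r_2} = \frac{1}{q} - \frac{1}{p_2}$. Hence the two equality conditions are compatible, and only \eqref{5.6} need be recorded in the statement. The main obstacle is the careful bookkeeping of the four separate convergence requirements hidden in $\mathcal{D}_2$ — two at the origin and two at infinity for the iterated integral — together with the need to keep the resulting sign condition distinct from the scaling condition that forces the power of $r$ to vanish; once these are disentangled, the stated equivalences follow at once from Theorem \ref{THM:Hardy3}$(ii)$.
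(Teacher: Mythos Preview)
Your proposal is correct and follows essentially the same route as the paper: apply Theorem \ref{THM:Hardy3}(ii), reduce $\mathcal D_1$ and $\mathcal D_2$ to radial integrals via the polar decomposition with $\lambda(r,\omega)=r^{Q-1}$, read off the convergence constraints, and set the residual power of $r$ to zero to obtain \eqref{5.5} and \eqref{5.6}. The paper likewise verifies that the two scaling relations are consistent precisely because $\frac{1}{r_2}=\frac{1}{q}-\frac{1}{p_2}$.
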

	\begin{rem}
		Similarly, the conditions for the remaining cases of Theorem \ref{THM:Hardy3} can be determined.
	\end{rem}

\subsection{Hyperbolic spaces} 
 
 Let $\mathbb H^n$ be the hyperbolic space of dimension $n$. Let $a\in \mathbb H^n$.
 Let us consider the weights 
 $$u(x)= (\sinh {\vert x \vert_a})^\alpha,\, v_1(x)= (\sinh {\vert x \vert_a})^{\beta_1}\textrm{ and }v_2(x)= (\sinh {\vert x \vert_a})^{\beta_2}.$$  
Then, $\mathcal D_1$  in terms of polar coordinates is equivalent to
 \begin{align}\label{condition:1}
 \nonumber&{\mathcal D_1} \simeq \sup_{\vert x \vert_a>0}\bigg(\displaystyle\int_{\vert x \vert_a}^\infty(\sinh{\rho})^{\alpha+n-1}d\rho\bigg)^\frac{1}{q}\bigg(\displaystyle\int_0^{\vert x \vert_a} (\sinh{\rho})^{\beta_1(1-p_1')+n-1}d\rho \bigg)^\frac{1}{p_1'}\\&\quad\quad\quad\quad\times\bigg(\displaystyle\int_0^{\vert x \vert_a} (\sinh{\rho})^{{\beta_2}(1-p_2')+n-1}d\rho \bigg)^\frac{1}{p_2'}.
 \end{align}
 To evaluate the first, second and the third integral, we need
$\alpha+n-1<0$,  $ \beta_1(1-p_1')+n>0,$ and $ {\beta_2}(1-p_2')+n>0,$ respectively.\\
Let us further examine the conditions for this supremum to be finite. 
For $\vert x \vert_a \gg1 $, we have $\sinh{\rho}\approx \exp {\rho}.$ Then \eqref{condition:1} can be stated as
\begin{align}
\nonumber&\sup_{\vert x \vert_a \gg1} \bigg(\displaystyle\int_{\vert x \vert_a}^\infty (\exp{\rho})^{\alpha+n-1}d\rho\bigg)^\frac{1}{q} \bigg(\displaystyle\int_0^{\vert x \vert_a} (\exp{\rho})^{\beta_1(1-p_1')+n-1}d\rho \bigg)^\frac{1}{p_1'}\\&\nonumber\quad\quad\quad\quad\quad\quad\quad\quad\times\bigg(\displaystyle\int_0^{\vert x \vert_a} (\exp{\rho})^{{\beta_2}(1-p_2')+n-1}d\rho \bigg)^\frac{1}{p_2'}\\&
\quad\quad\quad\simeq \sup_{\vert x \vert_a \gg1} (\exp {\vert x \vert_a})^{\bigg(\frac{\alpha+n-1}{q}+\frac{\beta_1(1-p_1')+n-1}{p_1'} +\frac{{\beta_2}(1-p_2')+n-1}{p_2'} \bigg)},\nonumber
\end{align} 
which is finite if and only if $$\frac{\alpha+n-1}{q}+\frac{\beta_1(1-p_1')+n-1}{p_1'} +\frac{{\beta_2}(1-p_2')+n-1}{p_2'}\le0.$$
For $\vert x \vert_a\ll 1 $, we have $\sinh{\rho}\approx\rho.$ Now, it can be represented as
\begin{align}
    \nonumber&\sup_{\vert x \vert_a\ll 1} \bigg(\displaystyle\int_{\vert x \vert_a}^\infty(\sinh{\rho})^{\alpha+n-1}d\rho\bigg)^\frac{1}{q} \bigg(\displaystyle\int_0^{\vert x \vert_a} {\rho}^{\beta_1(1-p_1')+n-1}d\rho \bigg)^\frac{1}{p_1'}\\&\nonumber\quad\quad\quad\quad\quad\quad\quad\quad\quad\times\bigg(\displaystyle\int_0^{\vert x \vert_a} {\rho}^{{\beta_2}(1-p_2')+n-1}d\rho \bigg)^\frac{1}{p_2'}
\\&\nonumber\simeq \sup_{\vert x \vert_a\ll 1} \bigg(\displaystyle\int_{\vert x \vert_a}^ R (\sinh{\rho})^{\alpha+n-1}d\rho +\displaystyle\int_{R}^\infty(\sinh{\rho})^{\alpha+n-1}d\rho\bigg)^\frac{1}{q} 
\\&\quad\quad\quad\times\bigg(\displaystyle\int_0^{\vert x \vert_a} {\rho}^{\beta_1(1-p_1')+n-1}d\rho \bigg)^\frac{1}{p_1'}\bigg(\displaystyle\int_0^{\vert x \vert_a} {\rho}^{\beta_2(1-p_2')+n-1}d\rho \bigg)^\frac{1}{p_2'}.\nonumber
\end{align}

 We have ${\sinh{\rho} }_{\vert x \vert_a\le \rho<R} \approx \rho$ for some small $R,$ and in that case the above supremum is 
\begin{align*}
    \approx \sup_{\vert x \vert_a\ll 1}\bigg( \vert x \vert_a^{\alpha+n}+C_R\bigg)^\frac{1}{q} {\vert x \vert_a }^\frac{\beta_1(1-p')+n}{p'_1}{\vert x \vert_a }^\frac{\beta_2(1-p_2')+n}{p_2'}.\end{align*}

Now, for $\alpha+n\ge0$, this is 
$$\approx \sup_{\vert x \vert_a\ll 1}{\vert x \vert_a}^{\frac{\beta_1(1-p_1')+n}{p_1'}+\frac{{\beta_2}(1-p_2')+n}{p_2'}},$$ which is finite if and only if $${\frac{\beta_1(1-p_1')+n}{p_1'}+\frac{{\beta_2}(1-p_2')+n}{p_2'}} \ge0.$$

At the same time, for $\alpha+n <0 $ it is

$$\approx \sup_{\vert x \vert_a\ll 1}{\vert x \vert_a}^{\frac{\alpha+n}{q}+{\frac{\beta_1(1-p_1')+n}{p_1'}+\frac{\beta_2(1-p_2')+n}{p_2'}}},$$  which is finite if and only if $$\frac{\alpha+n}{q}+{\frac{\beta_1(1-p_1')+n}{p_1'}+\frac{\beta_2(1-p_2')+n}{p_2'}}\ge0.$$

Summarising, we obtain the following result:

\begin{cor}\label{COR:hyp}
Let $a\in \mathbb H^n,$ where $\mathbb H^n$ be the hyperbolic space of dimension $n.$ The hyperbolic distance from $x$ to $a$ is denoted by  $|x|_a$. Let $1< \max(p_1,p_2) \leq q<\infty$ and let $\alpha,\beta_1, \beta_2 \in\mathbb R$. Then the inequality
\begin{align}\label{EQ:Hyp}
\nonumber&\bigg(\int_{\mathbb {H}_n} \bigg(H_2^B(f_1,f_2)\bigg)^q (\sinh {\vert x \vert_a})^\alpha dx\bigg)^\frac{1}{q}\le C\bigg\{\int_{\mathbb H^n} {\vert f_1(x) \vert}^{p_1} (\sinh {\vert x \vert_a})^{\beta_1} dx\bigg\}^{\frac1{p_1}}\\&\quad\quad\quad\quad\quad\quad\times\bigg\{\int_{\mathbb H^n} {\vert f_2(x) \vert}^{p_2} (\sinh {\vert x \vert_a})^{\beta_2} dx\bigg\}^{\frac1{p_2}}
\end{align}
holds for all measurable functions $f:\mathbb H^n\to{\mathbb C}$  if 
the parameters satisfy either of the following conditions:
\begin{itemize}
\item[(A)] for $\alpha+n \ge 0, $ if $\alpha+n<1$, $ \beta_1(1-p_1')+n>0$ and $ \beta_2(1-p_2')+n>0$ and $\frac{\alpha+n}{q}+\frac{\beta_1(1-p_1')+n}{p_1'}+\frac{\beta_2(1-p_2')+n}{p_2'} \le \frac{1}{q}+\frac{1}{p'_1}+\frac{1}{p'_2};$
\item[(B)] for $\alpha+n <0 $, if $ \beta_1(1-p_1')+n>0$ and $ \beta_2(1-p_2')+n>0$   and $0\leq \frac{\alpha+n}{q}+\frac{\beta_1(1-p_1')+n}{p_1'}+\frac{\beta_2(1-p_2')+n}{p_2'} \le \frac{1}{q}+\frac{1}{p'_1}+\frac{1}{p'_2}$.
\end{itemize} 
\end{cor}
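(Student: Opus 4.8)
The plan is to deduce Corollary~\ref{COR:hyp} directly from part (i) of Theorem~\ref{THM:Hardy3}, which states that for $1<\max(p_1,p_2)\le q<\infty$ the inequality \eqref{EQ:Hardy1} holds if and only if $\mathcal D_1<\infty$. Since the corollary asserts only sufficiency, it is enough to verify that each of the alternatives (A) and (B) forces $\mathcal D_1<\infty$ for the radial weights $u=(\sinh|x|_a)^\alpha$ and $v_i=(\sinh|x|_a)^{\beta_i}$. First I would substitute these weights into the definitions \eqref{EQ:U}--\eqref{EQ:V} of $U$ and $V_i$ and pass to polar coordinates via the hyperbolic polar decomposition $\lambda(r,\omega)=(\sinh r)^{n-1}$ recorded in Section~\ref{sec2}. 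As the weights are functions of the radial variable alone, the angular integrations factor into harmless multiplicative constants, reducing $\mathcal D_1$ to the one-dimensional quantity \eqref{condition:1}.

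The second step is to read off when the three radial integrals in \eqref{condition:1} converge at all. Near the origin $\sinh\rho\sim\rho$, so the two factors $\int_0^{|x|_a}(\sinh\rho)^{\beta_i(1-p_i')+n-1}\,d\rho$ are finite exactly when $\beta_i(1-p_i')+n>0$; at infinity $\sinh\rho\sim e^{\rho}$, so the tail $\int_{|x|_a}^\infty(\sinh\rho)^{\alpha+n-1}\,d\rho$ converges precisely when $\alpha+n-1<0$, i.e.\ $\alpha+n<1$. This accounts for the standing hypotheses $\beta_i(1-p_i')+n>0$ and for the constraint $\alpha+n<1$ appearing in case (A); in case (B) one has $\alpha+n<0<1$, so this constraint is automatic.

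The core of the argument is the asymptotic analysis of the supremum over $|x|_a\in(0,\infty)$, and here the non-doubling, exponential volume growth of $\mathbb H^n$ forces a separate treatment of the two regimes $|x|_a\gg1$ and $|x|_a\ll1$, unlike the scale-invariant power-law computation preceding Corollary~\ref{COR:hom}. For $|x|_a\gg1$ I would replace $\sinh\rho$ by $e^{\rho}$ everywhere; each factor becomes a pure exponential in $|x|_a$, so boundedness of the product holds iff the total exponent is nonpositive, namely
\begin{align*}
\frac{\alpha+n-1}{q}+\frac{\beta_1(1-p_1')+n-1}{p_1'}+\frac{\beta_2(1-p_2')+n-1}{p_2'}\le0,
\end{align*}
which, after collecting the three terms $-1/q-1/p_1'-1/p_2'$ onto the right, is exactly the upper bound stated in both (A) and (B). For $|x|_a\ll1$ I would use $\sinh\rho\sim\rho$ on the $V_i$-factors and split the tail integral as $\int_{|x|_a}^{R}+\int_R^\infty$: the second piece contributes a finite constant, while the first behaves like $|x|_a^{\alpha+n}$. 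The residual supremum is then a pure power of $|x|_a$, and its finiteness splits according to the sign of $\alpha+n$. When $\alpha+n\ge0$ the $U$-factor remains bounded (a logarithm at the endpoint $\alpha+n=0$ being absorbed by the strictly positive $V_i$-powers), so only $\frac{\beta_1(1-p_1')+n}{p_1'}+\frac{\beta_2(1-p_2')+n}{p_2'}\ge0$ is needed, and this already follows from the $\beta_i$-hypotheses; this is why case (A) carries no separate lower bound. When $\alpha+n<0$ the singular near-part contributes $|x|_a^{(\alpha+n)/q}$, and the sharp requirement becomes the explicit lower bound $0\le\frac{\alpha+n}{q}+\frac{\beta_1(1-p_1')+n}{p_1'}+\frac{\beta_2(1-p_2')+n}{p_2'}$ recorded in (B).

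Assembling the convergence conditions together with the large- and small-scale requirements yields precisely the alternatives (A) and (B), and Theorem~\ref{THM:Hardy3}(i) then delivers \eqref{EQ:Hyp}. I expect the main obstacle to be the bookkeeping in the small-scale regime: cleanly isolating the constant from the tail of the $U$-integral and correctly identifying which factor dominates in each sign case, so that the lower bound is stated sharply and the constraint $\alpha+n<1$ is not inadvertently dropped. The large-scale analysis, by contrast, is a routine exponential estimate once $\sinh\rho\approx e^{\rho}$ is invoked, and since the estimates in \eqref{condition:1} are in fact two-sided up to constants, the same computation shows that (A) and (B) are also essentially necessary, though the corollary records only the sufficient direction.
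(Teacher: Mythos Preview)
Your proposal is correct and follows essentially the same route as the paper: reduce to $\mathcal D_1<\infty$ via Theorem~\ref{THM:Hardy3}(i), pass to polar coordinates using $\lambda(r,\omega)=(\sinh r)^{n-1}$, extract the integrability constraints $\alpha+n<1$ and $\beta_i(1-p_i')+n>0$, and then split the supremum into the regimes $|x|_a\gg1$ (where $\sinh\rho\approx e^{\rho}$) and $|x|_a\ll1$ (where $\sinh\rho\approx\rho$ and the tail is split as $\int_{|x|_a}^{R}+\int_R^\infty$), with the further case distinction on the sign of $\alpha+n$. Your remarks that the small-scale lower bound in case (A) is automatic from the $\beta_i$-hypotheses and that $\alpha+n<1$ is subsumed by $\alpha+n<0$ in case (B) are accurate elaborations that the paper leaves implicit.
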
 
\begin{rem}
	The other cases of Theorem \ref{THM:Hardy3} can be treated in a similar way.
\end{rem}

\subsection{Cartan-Hadamard manifolds}

Let $(M,g)$ be a Cartan-Hadamard manifold and let $K_M$ be constant sectional curvature. Under this assumption, it is well known that $J(t,\omega)$ is a function of $t$ only.  To elaborate, if  $K_M=-b$ for $b\ge0$, then from \eqref{condition:11}, we have 
$J(t,\omega)= 1$ for $b=0$, and $J(t,\omega)=\left(\frac{\sinh \sqrt{b}t}{\sqrt{b}t}\right)^{n-1}$ for $b>0$,
see e.g. \cite {BC, GHL}.
 
 When $b=0$, let us take $u(x)=  \vert x \vert_a^\alpha,$ $v_1(x)=  \vert x \vert_a^\beta$ and $v_2(x)=  \vert x \vert_a^\gamma$ , then the  inequality \eqref{EQ:Hardy1} holds for $1< \max(p_1,p_2) \leq q<\infty$ if and only if
 \begin{align}
     \nonumber&\sup_{\vert x \vert_a>0} \bigg(\displaystyle\int_{M \backslash B(a,\vert x \vert_a)}\vert y \vert_a^\alpha dy \bigg)^\frac{1}{q} \bigg(\displaystyle \int_{B(a,\vert x \vert_a)}\vert y \vert_a^{\beta(1-p_1')} dy\bigg)^\frac{1}{p_1'}\\&\quad\quad\quad\quad\bigg(\displaystyle \int_{B(a,\vert x \vert_a)}\vert y \vert_a^{\gamma(1-p_2')} dy\bigg)^\frac{1}{p_2'}<\infty.\end{align}

After changing to the polar coordinates (see \eqref{condition:11}), this is equivalent to            

$$\sup_{\vert x \vert_a>0}\bigg(\int_{\vert x \vert_a }^\infty  {\rho}^{\alpha+n-1} d\rho \bigg)^\frac{1}{q}\bigg(\int_0^{\vert x \vert_a} {\rho}^{\beta(1-p_1')+n-1}d\rho \bigg)^\frac{1}{p_1'}\bigg(\int_0^{\vert x \vert_a} {\rho}^{\gamma(1-p_2')+n-1}d\rho \bigg)^\frac{1}{p_2'},$$ 
which is finite if and only if conditions of Corollary \ref{COR:hom} hold with $Q=n$ (which is natural since the curvature is zero).

When $b>0$, let us take $u(x)=(\sinh \sqrt{b}{\vert x \vert_a})^\alpha,$ $v_1(x)=(\sinh \sqrt{b}{\vert x \vert_a})^\beta$  and $v_2(x)=(\sinh \sqrt{b}{\vert x \vert_a})^\gamma$. Then the  inequality \eqref{EQ:Hardy1} holds for $1< \max(p_1,p_2) \leq q<\infty$ if and only if
\begin{align*}
    &\sup_{\vert x \vert_a>0} \bigg(\displaystyle\int_{ M \backslash B(a,\vert x \vert_a)} (\sinh \sqrt{b}{\vert y \vert_a})^{\alpha} dy \bigg)^\frac{1}{q} \bigg(\displaystyle \int_{B(a,\vert x \vert_a)}(\sinh \sqrt{b}{\vert y \vert_a})^{\beta(1-p_1')}dy \bigg)^\frac{1}{p_1'}\\&
\quad\quad\quad\quad\quad\quad\times \bigg(\displaystyle \int_{B(a,\vert x \vert_a)}(\sinh \sqrt{b}{\vert y \vert_a})^{\gamma(1-p_2')}dy \bigg)^\frac{1}{p_2'}<\infty.\end{align*}

After changing to the polar coordinates, this supremum is equivalent to 
\begin{align*}
&\sup_{\vert x \vert_a>0}\bigg(\int_{\vert x \vert_a}^\infty (\sinh \sqrt{b}{t})^\alpha (\frac{\sinh \sqrt{b} t}{\sqrt{b} t})^{n-1} t^{n-1}dt \bigg)^\frac{1}{q}\\&\quad\quad\quad\quad\quad\times\bigg( \int_{0}^ {\vert x \vert_a} (\sinh \sqrt{b}{t})^{\beta(1-p_1')}(\frac{\sinh \sqrt{b} t}{\sqrt{b} t})^{n-1} t^{n-1} dt \bigg)^\frac{1}{p_1'}\\&\quad\quad\quad\quad\quad\times
\bigg( \int_{0}^ {\vert x \vert_a} (\sinh \sqrt{b}{t})^{\gamma(1-p_2')}(\frac{\sinh \sqrt{b} t}{\sqrt{b} t})^{n-1} t^{n-1} dt \bigg)^\frac{1}{p_2'}
\\&\simeq\sup_{\vert x \vert_a>0}\bigg(\displaystyle\int_{\vert x \vert_a}^\infty (\sinh \sqrt{b}{t})^{\alpha+n-1}dt \bigg)^\frac{1}{q}\bigg(\displaystyle\int_0^{\vert x \vert_a} (\sinh \sqrt{b} t)^{\beta(1-p_1')+n-1}dt \bigg)^\frac{1}{p_1'}\\& 
\quad\quad\quad\quad\quad\times\bigg(\displaystyle\int_0^{\vert x \vert_a} (\sinh \sqrt{b} t)^{\gamma(1-p_2')+n-1}dt \bigg)^\frac{1}{p_2'},
\end{align*}
which has the same conditions for finiteness as the case of the hyperbolic space in Corollary \ref{COR:hyp} (which is also natural since it is the negative constant curvature case).

\section{Acknowledgement}
During the start of this work, the third author (DV) visited the Ghent Analysis \& PDE Centre, Ghent University, Belgium, as a visiting faculty from Miranda House, University of Delhi. The second (AS) and third author (DV) acknowledge the Ghent Analysis \& PDE Centre, Ghent University, Belgium, for their financial support and warm hospitality during the research visit. The first author (MR) is supported by the FWO Odysseus 1 grant G.0H94.18N: Analysis and Partial
Differential Equations and by the Methusalem programme of the Ghent University Special
Research Fund (BOF) (Grant number 01M01021). MR is also
 supported by the FWO Senior Research Grant G011522N and by EPSRC grant EP/V005529/1.

\end{document}